\newcommand{\rd}{\mathrm{d}}
\newcommand{\D}{\mathrm{D}}
\newcommand{\edp}{\widetilde{\Psi}}
\newcommand{\Elas}{\mathcal{E}}
\newcommand{\E}{\mathbb{E}}
\newcommand{\ep}{\varepsilon}
\newcommand{\R}{\mathbb{R}}
\newcommand{\Stab}{\mathcal{S}}
\newtheorem{theorem}{Theorem}[section]
\newtheorem{proposition}[theorem]{Proposition}
\newtheorem{lemma}[theorem]{Lemma}
\theoremstyle{definition}
\newtheorem{definition}[theorem]{Definition}
\title[Thermalization of rate-independent processes]{Thermalization of rate-independent processes by entropic regularization}
\author{T.\ J.\ Sullivan}
\address{
	T.\ J.\ Sullivan \\
	Applied \& Computational Mathematics and Graduate Aerospace Laboratories \\
	California Institute of Technology \\
	Mail Code 9-94 \\
	1200 East California Boulevard \\
	Pasadena \\
	CA 91125-9400 \\
	USA
}
\email{tjs@caltech.edu}
\urladdr{\url{http://www.its.caltech.edu/~tjs/}}
\author{M. Koslowski}
\address{
	M. Koslowski \\
	School of Mechanical Engineering \\
	Purdue University \\
	585 Purdue Mall \\
	West Lafayette \\
	IN 47907-2088 \\
	USA
}
\email{marisol@purdue.edu}
\author{F.\ Theil}
\address{
	F.\ Theil \\
	Mathematics Institute \\
	University of Warwick \\
	Coventry \\
	CV4 7AL \\
	UK
}
\email{f.theil@warwick.ac.uk}
\urladdr{\url{http://www.warwick.ac.uk/~masfk/}}
\author{M.\ Ortiz}
\address{
	M.\ Ortiz \\
	Graduate Aerospace Laboratories \\
	California Institute of Technology \\
	Mail Code 105-50 \\
	1200 East California Boulevard \\
	Pasadena \\
	CA 91125 \\
	USA
}
\email{ortiz@caltech.edu}
\urladdr{\url{http://www.aero.caltech.edu/~ortiz/}}
\date{\today}
\thanks{The first author acknowledges portions of this work supported by the Engineering and Physical Sciences Research Council (\url{http://www.epsrc.ac.uk/}).}
\keywords{gradient descent, non-linear evolution equations, thermodynamics}
\subjclass[2010]{47J35, %% Non-linear evolution equations
	82C35%% Irreversible thermodynamics, including Onsager-Machlup theory
}
\begin{document}

\begin{abstract}
	We consider the effective behaviour of a rate-independent process when it is placed in contact with a heat bath.  The method used to ``thermalize'' the process is an interior-point entropic regularization of the Moreau--Yosida incremental formulation of the unperturbed process.  It is shown that the heat bath destroys the rate independence in a controlled and deterministic way, and that the effective dynamics are those of a non-linear gradient descent in the original energetic potential with respect to a different and non-trivial effective dissipation potential.
\end{abstract}

\maketitle

\section{Introduction and Outline}

In \cite{Koslowski:2003, SullivanKoslowskiTheilOrtiz:2009}, it was proposed that a suitable model for the effect of a heat bath (\emph{i.e.}\ the application of statistically disordered energy) on a gradient descent is a time-incremental variational problem in which, in each time step, the usual work done competes with an entropy term that penalizes coherent, deterministic evolutions.  In the case of linear kinetics (two-homogeneous dissipation), this method is equivalent to the one used in \cite{JordanKinderlehrerOtto:1998} to generate the Fokker--Planck equation for an It{\=o} stochastic gradient descent.  This paper examines the case of one-homogeneous dissipation and generalizes the results of \cite{SullivanKoslowskiTheilOrtiz:2009}.

As outlined in Section \ref{sec:notation}, the discrete-time formulation of a rate-independent evolution in an energetic potential $E(t, x)$ with respect to a one-homogeneous dissipation potential $\Psi(x)$ is to find, given state $x_{i}$ at time $t_{i}$, the state $x_{i + 1}$ at time $t_{i + 1}$ that minimizes
\begin{equation}
	\label{eq:intro_work}
	W_{i + 1}(x_{i}, x_{i + 1}) := E(t_{i + 1}, x_{i + 1}) - E(t_{i}, x_{i}) + \Psi(x_{i + 1} - x_{i}).
\end{equation}
To represent the influence of a heat bath of ``intensity'' $\theta > 0$ upon this evolution, we consider an associated variational problem \eqref{eq:IP-Cost-2} for the probability distribution of the \emph{random} next state of the system, the solution of which is the Gibbsian density
\begin{equation}
	\label{eq:intro_Gibbs}
	\rho_{i + 1}(x_{i + 1}|x_{i}) \propto \exp \left( - \frac{W_{i + 1}(x_{i}, x_{i + 1})}{\theta (t_{i + 1} - t_{i})} \right).
\end{equation}
This paper shows that, under suitable assumptions on $E$ and $\Psi$, in the limit as the time step tends to zero, this procedure yields a non-trivial \emph{deterministic} limiting process.  This limiting process is a gradient descent in the original energetic potential $E$ but with respect to a new dissipation potential $\edp$ that is a non-linear transformation (the Cramer transform) of the original one $\Psi$.  As demonstrated in \cite{Sullivan:2009, SullivanKoslowskiTheilOrtiz:2009}, this non-linear gradient descent arises in mechanical contexts such as Andrade creep.

Rate-independent processes play an important r{\^o}le in the modelling of many physical phenomena such as plasticity and phase transformations in elastic solids, electromagnetism, dry friction on surfaces, and pinning problems in superconductivity.  It is widely accepted that rate-independent processes, which describe mesoscopic or macroscopic properties, are limit processes for more complicated microstructural evolutions:  the rate-independent model arises in the limit of vanishing inertia, relaxation time and thermal effects.  Hence, this paper is concerned with the relaxation of the third of these limiting assumptions.

In Section \ref{sec:notation}, the notation and set-up of the problem are given, including a brief review of the necessary elements of the theories of gradient descents and rate-independent processes.  In Section \ref{sec:heuristics}, some formal calculations are performed that motivate the introduction of the effective dissipation potential $\edp$.  In Section \ref{sec:main}, $\edp$ is defined more formally, its properties examined, and the main convergence theorem (Theorem \ref{thm:main}) is stated.  Some conclusions and outlook for future work are given in Section \ref{sec:conclusions}.  The proofs of the various results are given in Section \ref{sec:proofs}.

\section{Notation and Set-Up of the Problem}
\label{sec:notation}

\subsection{Gradient Descents}

Both the unperturbed and perturbed processes of study in this paper are examples of gradient descents.  The standard example of a gradient descent is the ordinary differential equation $\dot{x}(t) = - \nabla E(t, x(t))$ for $x \colon [0, T] \to \R^{n}$, which is characterized by the energy evolution law
\begin{equation}
	\label{eq:E_diff_classical}
	\frac{\rd}{\rd t} E(t, x(t)) = (\partial_{t} E)(t, x(t)) - \frac{1}{2} | \dot{x}(t) |^{2} - \frac{1}{2} | \nabla E(t, x(t)) |^{2}.
\end{equation}
In general, gradient descents may be considered on any metric space $(\mathcal{Q}, d)$;  see \cite{AmbrosioGigliSavare:2008} for a comprehensive treatment.  For the purposes of this paper, however, it is enough to consider the case in which $\mathcal{Q}$ is a subset of a Banach space $(\mathcal{X}, \| \cdot \|)$.

A gradient descent in $\mathcal{Q}$ is characterized by an initial condition $x_{0} \in \mathcal{Q}$, an energetic potential $E \colon [0, T] \times \mathcal{Q} \to \R$, and a dissipation potential $\Psi \colon \mathcal{X} \to [0, +\infty]$, which is convex and satisfies $\Psi(0) = 0$.  For simplicity, $E(t, x)$ is assumed to be differentiable with respect to both $t$ and $x$.

\begin{definition}
	\label{defn:MetGD}
	An absolutely continuous curve $x \colon [0, T] \to \mathcal{Q}$ is said to be a \emph{gradient descent} in $E$ with respect to $\Psi$ and starting at $x_{0}$ if
	\begin{enumerate}
		\item $x(0) = x(0+) = x_{0}$;
		\item $t \mapsto E(t, x(t))$ is absolutely continuous;
		\item the (differential) energy inequality
		\begin{equation}
			\label{eq:E_diff_Banach}
			\frac{\rd}{\rd t} E(t, x(t)) \leq (\partial_{t} E)(t, x(t)) - \Psi ( \dot{x}(t) ) - \Psi^{\star} ( - \D E(t, x(t)) )
		\end{equation}
		is satisfied for almost every $t \in [0, T]$, where $\Psi^{\star} \colon \mathcal{X}^{\ast} \to [0, +\infty]$ denotes the convex conjugate (Legendre--Fenchel transform) of $\Psi$, defined by
		\begin{equation}
			\label{eq:convex_conjugate}
			\Psi^{\star}(\ell) := \sup \{ \langle \ell, x \rangle - \Psi(x) \mid x \in \mathcal{X} \}.
		\end{equation}
	\end{enumerate}
\end{definition}

The condition \eqref{eq:E_diff_Banach} is the appropriate generalization of \eqref{eq:E_diff_classical};  the classical case of linear kinetics is that in which the dissipation potential is given by $\Psi(x) := \frac{1}{2} \| x \|^{2}$.  Shortly, we shall consider rate-independent processes, in which $\Psi$ is positively homogeneous of degree one;  the limiting processes of this paper will be gradient descents for which $\Psi$ is not homogeneous of any degree.

\subsection{Incremental Formulation}

The analysis and numerical approximation of gradient descents are often performed using a discrete-time incremental variational formulation.  At each time step, the problem is to minimize the Moreau--Yosida regularization of $E(t_{i}, \cdot)$ \cite{Moreau:1965, Yosida:1965}.  $P$ will denote a partition of the interval of time $[0, T]$, \emph{i.e.}\ a finite strictly increasing sequence
\begin{equation}
	\label{eq:partition}
	P = \{ 0 = t_{0} < t_{1} < \dots < t_{N} = T \},
\end{equation}
where $\Delta t_{i} := t_{i} - t_{i - 1}$ and $[P]$ denotes the \emph{mesh size} of $P$:
\begin{equation}
	\label{eq:mesh}
	[P] := \max_{i = 1, \dots, N} | \Delta t_{i} |.
\end{equation}

The Moreau--Yosida scheme is a causal sequence of variational problems, the Euler--Lagrange equations of which are the equations of motion for the original gradient descent:

\begin{definition}
	\label{defn:MoreauYosida}
	The \emph{Moreau--Yosida incremental formulation} of the gradient descent in $E$ with respect to $\Psi$ is to solve the following sequence of minimization problems:  given an initial condition $x_{0}^{(P)} = x_{0} \in \mathcal{Q}$, find, for $i = 0, \ldots, N - 1$, $x_{i + 1}^{(P)} \in \mathcal{Q}$ to minimize
	\begin{equation}
		\label{eq:MoreauYosida}
		E \big( t_{i + 1}, x_{i + 1}^{(P)} \big) - E \big( t_{i}, x_{i}^{(P)} \big) + \Delta t_{i + 1} \Psi \left( \frac{\Delta x_{i + 1}^{(P)}}{\Delta t_{i + 1}} \right).
	\end{equation}
	By abuse of notation, let $x^{(P)} \colon [0, T] \to \mathcal{Q}$ also denote the c\`{a}dl\`{a}g piecewise-constant interpolation of the sequence $\big( x_{i}^{(P)} \big)_{i = 0}^{N}$, as defined by
	\begin{equation}
		\label{eq:cadlag_interpolation}
		x^{(P)}(t) := x_{i}^{(P)} \text{ for } t \in [t_{i}, t_{i + 1}).
	\end{equation}
\end{definition}

\subsection{Rate-Independent Processes}

A rate-independent process is an evolutionary system that has no intrinsic time-scale:  it ``reacts only as quickly as its time-dependent inputs''.  Put another way, the solution operator commutes with monotone reparametrizations of time.  There is much literature on the theory, modelling and analysis of rate-independent processes and the connections with gradient descent theory; see \emph{e.g.}\ \cite{Mielke:2005, Mielke:2007, MielkeRossiSavare:2009}.

\begin{definition}
	\label{defn:rate-independence}
	Let $\mathcal{Q}$ and $\mathcal{Q}^{\ast}$ be topological spaces.  Suppose that each choice of initial condition $x_{0} \in \mathcal{Q}$ and each input $\ell \colon [t_{0}, t_{1}] \to \mathcal{Q}^{\ast}$ determines a set of outputs
	\[
		\mathcal{O}([t_{0}, t_{1}], x_{0}, \ell) \subseteq \{ x \colon [t_{0}, t_{1}] \to \mathcal{Q} \mid x(t_{0}) = x_{0} \}.
	\]
	The input-output relationship is said to be \emph{rate-independent} if, for every strictly increasing and surjective $\varphi \colon [t'_{0}, t'_{1}] \to [t_{0}, t_{1}]$,
	\[
		x \in \mathcal{O}([t_{0}, t_{1}], x_{0}, \ell) \iff x \circ \varphi \in \mathcal{O}([t'_{0}, t'_{1}], x_{0}, \ell \circ \varphi).
	\]
	The relationship is said to determine a (possibly multi-valued) \emph{evolutionary system} if concatenations and restrictions of solutions are also solutions, \emph{i.e.}
	\begin{align*}
		& \hat{x} \in \mathcal{O}([t_{0}, t_{1}], x_{0}, \ell|_{[t_{0}, t_{1}]}), \tilde{x} \in \mathcal{O}([t_{1}, t_{2}], x_{1}, \ell|_{[t_{1}, t_{2}]}) \text{ and } \hat{x}(t_{1}) = x_{1} \\
		& \implies x \in \mathcal{O}([t_{0}, t_{2}], x_{0}, \ell) \text{ where } x(t) := \begin{cases} \hat{x}(t), & \text{if $t \in [t_{0}, t_{1}]$,} \\ \tilde{x}(t), & \text{if $t \in [t_{1}, t_{2}]$;} \end{cases}
	\end{align*}
	and
	\begin{align*}
		& x \in \mathcal{O}([t_{0}, t_{1}], x_{0}, \ell), [s_{0}, s_{1}] \subseteq [t_{0}, t_{1}] \text{ and } y_{0} := x(s_{0}) \\
		& \implies x|_{[s_{0}, s_{1}]} \in \mathcal{O}([s_{0}, s_{1}], y_{0}, \ell|_{[s_{0}, s_{1}]}).
	\end{align*}
\end{definition}

In the case of gradient descents on (subsets of) Banach spaces as described above, rate-independence corresponds to the dissipation potential $\Psi \colon \mathcal{X} \to [0, +\infty]$ being positively homogeneous of degree one, \emph{i.e.}
\begin{equation}
	\label{eq:pos_homo_deg_1}
	\Psi(\alpha x) = \alpha \Psi(x) \text{ for all $\alpha > 0$, $x \in \mathcal{X}$.}
\end{equation}
It will be assumed that $\Psi$ is both continuous and non-degenerate:  \emph{i.e.}\ there exist constants $c_{\Psi}, C_{\Psi} > 0$ such that
\begin{equation}
	\label{eq:Psi_cC}
	c_{\Psi} \| x \| \leq \Psi(x) \leq C_{\Psi} \| x \| \text{ for all } x \in \mathcal{X}.
\end{equation}
This is equivalent to assuming that $\Psi$ is the convex conjugate of the characteristic function of a suitable subset of $\mathcal{X}^{\ast}$:
\begin{equation}
	\label{eq:DissElas}
	\Psi(x) = \chi_{\Elas}^{\star} (x) = \sup \left\{ \langle \ell, x \rangle \,\middle|\, \ell \in \Elas \right\}
\end{equation}
for some bounded, closed and convex set $\Elas \subseteq \mathcal{X}^{\ast}$ having $0$ as an interior point.  $\Elas$ is known as the \emph{elastic region} and its frontier $\partial \Elas$ is known as the \emph{yield surface}.  The set
\begin{equation}
	\label{eq:stable}
	\Stab(t) := \{ x \in \mathcal{Q} \mid - \D E(t, x) \in \Elas \}
\end{equation}
is the collection of (locally) \emph{stable states} at time $t$;  since in this paper the energy $E$ will always be convex, the distinction between global and local stability will not matter.

As shown in \cite[Theorem 7.1]{MielkeTheil:2004}, the rate-independent problem is well-posed in the case that $\mathcal{Q} = \mathcal{X}$ is a separable and reflexive Banach space;  that $\Psi$ satisfies \eqref{eq:pos_homo_deg_1} and \eqref{eq:Psi_cC};  and that $E(t, \cdot)$ is of smoothness class $\mathcal{C}^{3}$, with the eigenvalues of $\D^{2} E$ bounded below by some $\gamma > 0$, uniformly in time and space.

\subsection{Thermalized Gradient Descents: Entropic Regularization}

Consider a gradient descent in $\R^{n}$ with respect to an energy $E$ and dissipation $\Psi$.  The corresponding Moreau--Yosida incremental problem is as follows:  given the state $x_{i}$ at time $t_{i}$, the aim is to find $x_{i + 1}$ to minimize
\[
	W_{i + 1}(x_{i}, x_{i + 1}) := E(t_{i + 1}, x_{i + 1}) - E(t_{i}, x_{i}) + \Delta t_{i + 1} \Psi \left( \frac{\Delta x_{i + 1}}{\Delta t_{i + 1}} \right).
\]
To model the effect of a heat bath on the gradient descent, we pass to an extended problem, in which the state of the system at time $t_{i}$ is a random variable $X_{i}$.  

Given that the random state $X_{i}$ assumes the value $x_{i}$ at time $t_{i}$, the random next state $X_{i + 1}$ for time $t_{i + 1}$ is posited to have the conditional probability density function $\rho_{i + 1}(\cdot | x_{i}) \in L^{1}(\R^{n}, \lambda; [0, +\infty])$ that minimizes
\begin{equation}
	\label{eq:IP-Cost-2}
	\int_{\R^{n}} \big( W_{i + 1}(x_{i}, \cdot) \rho_{i + 1} (\cdot|x_{i}) + \ep_{i + 1} \rho_{i + 1} (\cdot|x_{i}) \log \rho_{i + 1} (\cdot|x_{i}) \big) \, \rd \lambda,
\end{equation}
where $\lambda$ denotes Lebesgue measure.  The parameter $\ep_{i + 1} > 0$ represents the intensity of the heat bath to which the gradient descent is coupled;  more precisely, $\ep_{i + 1}$ is the amount of (disordered) energy that the heat bath injects into the system over the time interval $[t_{i}, t_{i + 1}]$.  

Equivalently to \eqref{eq:IP-Cost-2}, given that the current state $X_{i}$ has probability density function $\rho_{i} \in L^{1}(\R^{n}, \lambda; [0, +\infty])$, we may seek a joint probability density function $\rho_{i, i + 1} \in L^{1}(\R^{n}  \times \R^{n}, \lambda \otimes \lambda; [0, +\infty])$ that has $\rho_{i}$ as its first marginal and minimizes
\begin{equation}
	\label{eq:IP-Cost}
	\iint_{\R^{n}} \big( W_{i + 1} \rho_{i, i + 1} + \ep_{i + 1} \rho_{i, i + 1} \log \rho_{i, i + 1} \big) \, \rd (\lambda \otimes \lambda).
\end{equation}
The connection between \eqref{eq:IP-Cost-2} and \eqref{eq:IP-Cost} is given by
\[
	\rho_{i, i + 1}(x_{i}, x_{i + 1}) = \rho_{i}(x_{i}) \rho_{i + 1} (x_{i + 1}|x_{i}).
\]

The minimizer of \eqref{eq:IP-Cost-2} is a Gibbs--Boltzmann-type conditional probability density function (\emph{cf.}\ \cite{JordanKinderlehrer:1996, JordanKinderlehrerOtto:1998}):
\begin{equation}
	\label{eq:Gibbsian_pdf}
	\rho_{i + 1} (x_{i + 1}|x_{i}) = \frac{\exp \left( - W_{i + 1}(x_{i},x_{i + 1}) / \ep_{i + 1} \right)}{\int_{\R^{n}} \exp \left( - W_{i + 1}(x_{i},x_{i + 1}) / \ep_{i + 1} \right) \rd x_{i + 1}},
\end{equation}

Hence, given a partition $P$ of $[0, T]$, an initial state $x_{0} \in \R^{n}$, an energetic potential $E \colon [0, T] \times \R^{n} \to \R$ and a dissipation potential $\Psi \colon \R^{n} \to [0, +\infty)$, the \emph{thermalized gradient descent} $X^{(P)}$ denotes the discrete-time Markov chain that has transition probability densities given by \eqref{eq:Gibbsian_pdf}.  By the usual abuse of notation, $X^{(P)}$ will also denote the c{\`a}dl{\`a}g piecewise-constant interpolation \eqref{eq:cadlag_interpolation}, defined for all times $t \in [0, T]$.

In the classical case of linear kinetics (\emph{i.e.}\ $\Psi(x) = \frac{1}{2} | x |^{2}$ for $x \in \R^{n}$), this procedure generates the same sequence of densities as the method of \cite{JordanKinderlehrerOtto:1998}, and they converge as $[P] \to 0$ to the solution of the Fokker--Planck equation for the It{\=o} stochastic gradient descent $\dot{X}(t) = - \nabla E(t, X(t)) + \sqrt{\ep} \dot{W}(t)$.  Theorem \ref{thm:main} establishes the deterministic limiting behaviour of the stochastic process $X^{(P)}$ as $[P] \to 0$ in the case of a one-homogeneous dissipation potential $\Psi$.

\section{Heuristics and Calculation of Moments}
\label{sec:heuristics}

In this section we perform some calculations to motivate the main result of Section \ref{sec:main}.  For simplicity, suppose temporarily that $E$ is of the prototypical quadratic type
\[
	E(t, x) = \tfrac{1}{2} \langle A x, x \rangle - \langle \ell(t), x \rangle
\]
for some symmetric and non-negative $A \colon \R^{n} \to (\R^{n})^{\ast}$ and some smooth enough $\ell \colon [0, T] \to (\R^{n})^{\ast}$;  this assumption will be relaxed shortly.  Also, merely to aid the heuristic and simplify the notation, suppose that the parameters $\ep_{i} > 0$ are all equal to some constant $\ep > 0$ independent of $i$ and that the time step $\Delta t_{i}$ is also independent of $i$.
 
Consider the following calculation for the conditional expectation of the next state $X_{i + 1}^{(P)}$ of the Markov chain $X^{(P)}$ given that $X_{i}^{(P)} = x_{i}$:
\begin{align*}
	& \E \left[ X_{i + 1}^{(P)} \,\middle|\, X_{i}^{(P)} = x_{i} \right] \\
	& \quad = \int_{\R^{n}} x_{i + 1} \rho_{i + 1} ( x_{i + 1} \mid x_{i} ) \, \rd x_{i + 1} \\
	& \quad = \frac{\displaystyle \int_{\R^{n}} x_{i + 1} \exp \big( - (E(t_{i + 1}, x_{i + 1}) - E(t_{i}, x_{i}) + \Psi(x_{i + 1} - x_{i}))/ \ep \big) \, \rd x_{i + 1}}{\displaystyle \int_{\R^{n}} \exp \big( - (E(t_{i + 1}, x_{i + 1}) - E(t_{i}, x_{i}) + \Psi(x_{i + 1} - x_{i}))/ \ep \big) \, \rd x_{i + 1}} \\
	& \quad = \frac{\displaystyle \int_{\R^{n}} x_{i + 1} \exp \big( - (E(t_{i + 1}, x_{i + 1}) + \Psi(x_{i + 1} - x_{i}))/ \ep \big) \, \rd x_{i + 1}}{\displaystyle \int_{\R^{n}} \exp \big( - (E(t_{i + 1}, x_{i + 1}) + \Psi(x_{i + 1} - x_{i}))/ \ep \big) \, \rd x_{i + 1}} \\
	\intertext{and setting $z := (x_{i + 1} - x_{i}) / \ep$ yields}
	& \quad = x_{i} + \ep \frac{\displaystyle \int_{\R^{n}} z \exp \big( - ( \langle A x_{i} - \ell(t_{i + 1}), z \rangle + \tfrac{\ep}{2} \langle A z, z \rangle + \Psi(z)) \big) \, \rd z}{\displaystyle \int_{\R^{n}} \exp \big( - ( \langle A x_{i} - \ell(t_{i + 1}), z \rangle + \tfrac{\ep}{2} \langle A z, z \rangle + \Psi(z)) \big) \, \rd z}.
\end{align*}
Let
\begin{equation}
	\label{eq:Psitilde_eps_quadratic}
	\edp_{\ep}^{\star} (w) := \log \int_{\R^{n}} \exp \big( - (\langle w, z \rangle + \tfrac{\ep}{2} \langle A z, z \rangle + \Psi(z)) \big) \, \rd z.
\end{equation}
Then the result of the above calculation may be summarized as
\[
	\E \left[ \Delta X_{i + 1}^{(P)} \,\middle|\, X_{i}^{(P)} = x_{i} \right] = \left. - \ep \D \edp_{\ep}^{\star} (w) \right|_{w = A x_{i} - \ell(t_{i + 1})},
\]
\emph{i.e.}
\[
	\E \left[ \Delta X_{i + 1}^{(P)} \,\middle|\, X_{i}^{(P)} = x_{i} \right] = - \ep \D \edp_{\ep}^{\star} (\D E(t_{i + 1}, x_{i})).
\]

The same change of variables $z := (x_{i + 1} - x_{i}) / \ep$ gives an estimate for the $p^{\mathrm{th}}$ moment of the increments of the Markov chain:
\begin{align*}
	& \E \left[ \big| \Delta X_{i + 1}^{(P)} |^{p} \,\middle|\, X_{i}^{(P)} = x_{i} \right] \\
	& \quad \leq \ep^{p} \frac{\displaystyle \int_{\R^{n}} | z |^{p} \exp \big( - ( \langle A x_{i} - \ell(t_{i + 1}), z \rangle + \tfrac{\ep}{2} \langle A z, z \rangle + \Psi(z)) \big) \, \rd z}{\displaystyle \int_{\R^{n}} \exp \big( - ( \langle A x_{i} - \ell(t_{i + 1}), z \rangle + \tfrac{\ep}{2} \langle A z, z \rangle + \Psi(z)) \big) \, \rd z},
\end{align*}

For later reference, these calculations are summarized in the following lemma:

\begin{lemma}
	\label{lem:Markov-ExpAndMoments}
	Let $E(t, x) = \frac{1}{2} \langle A x, x \rangle - \langle \ell(t), x \rangle$ with $A \colon \R^{n} \to (\R^{n})^{\ast}$ symmetric and non-negative.  Suppose also that $\Psi = \chi_{\Elas}^{\star} \colon \R^{n} \to [0, + \infty)$ is $1$-homogeneous and non-degenerate.  Let $X^{(P)}$ denote the thermalized gradient descent Markov chain in $E$ and $\Psi$ on a partition $P$ of $[0, T]$.  Then
	\[
		\E \left[ \Delta X_{i + 1}^{(P)} \,\middle|\, X_{i}^{(P)} = x_{i} \right] = - \ep \D \edp_{\ep}^{\star} (A x_{i} - \ell(t_{i + 1})).
	\]
	and, for $p > 0$,
	\begin{align*}
		& \E \left[ \big| \Delta X_{i + 1}^{(P)} |^{p} \,\middle|\, X_{i}^{(P)} = x_{i} \right] \\
		& \quad \leq \ep^{p} \frac{\displaystyle \int_{\R^{n}} | z |^{p} \exp \big( - ( \langle A x_{i} - \ell(t_{i + 1}), z \rangle + \tfrac{\ep}{2} \langle A z, z \rangle + \Psi(z)) \big) \, \rd z}{\displaystyle \int_{\R^{n}} \exp \big( - ( \langle A x_{i} - \ell(t_{i + 1}), z \rangle + \tfrac{\ep}{2} \langle A z, z \rangle + \Psi(z)) \big) \, \rd z}.
	\end{align*}
\end{lemma}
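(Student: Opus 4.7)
The plan is to evaluate the conditional expectation and moments directly from the explicit Gibbs density \eqref{eq:Gibbsian_pdf}, executing in full the change of variables $z := (x_{i+1}-x_{i})/\ep$ that is sketched in the heuristic calculation preceding the statement.

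First I substitute the quadratic form of $E$ into $W_{i+1}(x_{i}, x_{i+1})$ and write $x_{i+1} = x_{i} + \ep z$. Expanding
\[
	E(t_{i+1}, x_{i} + \ep z) = E(t_{i+1}, x_{i}) + \ep \langle A x_{i} - \ell(t_{i+1}), z\rangle + \tfrac{\ep^{2}}{2}\langle A z, z\rangle,
\]
the terms depending only on $x_{i}$ factor out of both the numerator and denominator of the Gibbs density and cancel. By $1$-homogeneity of $\Psi$, $\Psi(\ep z)/\ep = \Psi(z)$, and the Jacobian $\ep^{n}$ of the substitution also cancels. What remains in the exponent is exactly $\langle w, z\rangle + \tfrac{\ep}{2}\langle A z, z\rangle + \Psi(z)$, as in \eqref{eq:Psitilde_eps_quadratic}, with $w := A x_{i} - \ell(t_{i+1}) = \D E(t_{i+1}, x_{i})$.

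The first claim then follows by recognising the resulting ratio of integrals as $-\D \edp_{\ep}^{\star}(w)$, via differentiation of \eqref{eq:Psitilde_eps_quadratic} under the integral sign with respect to the parameter $w$. For the moment bound I perform the identical substitution: $|\Delta X_{i+1}^{(P)}|^{p}$ becomes $\ep^{p} |z|^{p}$ in the numerator, while the Jacobians again cancel, so the stated ratio is recovered (in fact as an equality, though the lemma records it as the upper bound that will actually be used downstream).

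The main technical point is to justify integrability of the partition function defining $\edp_{\ep}^{\star}(w)$ and the interchange of differentiation and integration. Both are handled by coercivity of $\langle w, z\rangle + \tfrac{\ep}{2}\langle A z, z\rangle + \Psi(z)$: the lower bound $\Psi(z) \geq c_{\Psi} \|z\|$ from \eqref{eq:Psi_cC} dominates the linear term $\langle w, z\rangle$ whenever $w$ lies in the interior of $\Elas$, while the quadratic term $\tfrac{\ep}{2}\langle A z, z\rangle$ suffices on its own when $A$ is positive definite. A dominated-convergence argument — with a majorant of the form $(1+|z|^{p+1}) \exp(-\eta \|z\|)$ for some $\eta>0$ on each compact set of admissible $w$ — then legitimises differentiation under the integral and completes the proof.
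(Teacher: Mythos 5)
Your proof is correct and takes essentially the same route as the paper, whose own justification of Lemma \ref{lem:Markov-ExpAndMoments} is precisely the change-of-variables computation $z = (x_{i+1}-x_{i})/\ep$ carried out immediately before its statement (cancellation of the $x_{i}$-dependent constants and of the Jacobian $\ep^{n}$, $1$-homogeneity giving $\Psi(\ep z)/\ep = \Psi(z)$, and identification of the resulting ratio with $-\D\edp_{\ep}^{\star}(w)$ by differentiating \eqref{eq:Psitilde_eps_quadratic} under the integral); your added discussion of integrability only strengthens this. One minor correction to that discussion: the finiteness condition for the partition function is $-w \in \mathring{\Elas}$, not $w \in \mathring{\Elas}$ (cf.\ Proposition \ref{prop:EDDP}(2)), and on all of $-\mathring{\Elas}$ it rests on the positivity of $m(w)$ from Lemma \ref{lem:InfLemma} rather than on the cruder bound $\Psi(z) \geq c_{\Psi}|z|$, which only covers $|w| < c_{\Psi}$.
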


The above calculations, including Lemma \ref{lem:Markov-ExpAndMoments}, also go through even if $E$ is not a quadratic form.  The non-$\Psi$ terms in the exponent are the Taylor series expansion of $E(t_{i + 1}, x_{i + 1}) - E(t_{i + 1}, x_{i})$ about $x_{i}$ and, therefore, the corresponding expression for $\edp_{\ep}^{\star}$ is
\begin{equation}
	\label{eq:Psitilde_eps}
	\edp_{\ep}^{\star} (w) := \log \int_{\R^{n}} \exp \left( - \left( \langle w, z \rangle + \sum_{k = 2}^{\infty} \tfrac{\ep^{k - 1}}{k !} \langle \D^{k} E(t_{i + 1}, x_{i}), z^{\otimes k} \rangle + \Psi(z) \right) \right) \rd z.
\end{equation}
By abuse of notation, Lemma \ref{lem:Markov-ExpAndMoments} will henceforth be taken to refer to the generalized result for not-necessarily-quadratic $E$ using \eqref{eq:Psitilde_eps}.

Note, however, that in none of these expressions does the time increment appear explicitly.  This is to be expected, since the original evolution was a rate-independent one.  Therefore, in order to obtain a Markov chain that takes any account of time, it will be necessary to take $\ep$ to be proportional to the time step.  Physically, since $E$, $\Psi$ and $\ep$ all have the units of energy, this corresponds to assuming that the heat bath supplies energy to the system at a constant rate:  the power of the heat bath is the constant of proportionality $\theta$ between $\ep$ and the time step.  The parameter $\theta$ measures the intensity of the heat bath and can be seen, in some sense, as the ``temperature''.

The potential $\edp_{\ep}^{\star} \colon (\R^{n})^{\ast} \to [0, + \infty]$ encodes a great deal of information about the Markov chain $X$.  Most of the terms in the exponent of $\edp_{\ep}^{\star}$ are of order $\ep$ or higher, and so can reasonably be expected to have no influence in the limit as $[P]$ tends to zero in proportion to $\ep$.  The limiting dynamics of the Markov chain are expected to be controlled by an \emph{effective dual dissipation potential} $\edp^{\star}$, which is $\edp_{\ep}^{\star}$ with these higher-order terms omitted.  Furthermore, the strong similarity to the Euler method for an ordinary differential equation and the fact that the variances are of order $\ep^{2} \ll \ep$ suggest that the limiting evolution takes the form of a deterministic ordinary differential equation
\begin{equation}
	\label{eq:limiting_ode}
	\dot{y}(t) = - \theta \D \edp^{\star} (\D E(t, y(t))),
\end{equation}
where $\theta = \ep_{i} / \Delta t_{i}$.  By convex duality, \eqref{eq:limiting_ode} is equivalent to
\begin{equation}
	\label{eq:limiting_gd1}
	\D \edp \left( - \frac{\dot{y}(t)}{\theta} \right) = \D E(t, y(t)).
\end{equation}
If $\Psi$ is even (\emph{i.e.}\ $\Psi(x) = \Psi(-x)$), then so is $\edp$, in which case \eqref{eq:limiting_gd1} is equivalent to the non-linear gradient descent
\begin{equation}
	\label{eq:limiting_gd2}
	\D \edp \left( \frac{\dot{y}(t)}{\theta} \right) = - \D E(t, y(t)).
\end{equation}
Therefore, the conjecture is that the effective behaviour of the rate-independent process in $E$ with respect to $\Psi$ when brought into contact with the heat bath is that of a gradient descent in $E$ with respect to the non-linear effective dissipation potential $\edp$.

\section{Main Results}
\label{sec:main}

In this section the formal manipulations of the previous section are made more precise:  the effective (dual) dissipation potential that corresponds to $\Psi$ is introduced and its properties examined;  and the main convergence theorem about the limiting behaviour of the thermalized gradient descent Markov chain $X^{(P)}$ as $[P] \to 0$ is stated.

\subsection{Effective Dissipation Potential}

As mentioned above, the effective dual dissipation potential $\edp^{\star}$ is simply the functional $\edp_{\ep}^{\star}$ of \eqref{eq:Psitilde_eps} with $\ep$ set equal to zero, and $\edp$ is its convex conjugate:

\begin{definition}
	Given $\Psi \colon \R^{n} \to [0, +\infty]$ homogeneous of degree one, define the associated \emph{effective dual dissipation potential} $\edp^{\star} \colon (\R^{n})^{\ast} \to [0, + \infty]$ by
	\begin{equation}
		\label{eq:EDDP}
		\edp^{\star} (w) := \log \int_{\R^{n}} \exp ( - (\langle w, z \rangle + \Psi(z)) ) \, \rd z.
	\end{equation}
	The associated \emph{effective dissipation potential} $\edp \colon \R^{n} \to [0, + \infty]$ is the Cramer transform of $\Psi$ and is defined by convex conjugation:  $\edp := (\edp^{\star})^{\star}$, \emph{i.e.}
	\begin{equation}
		\label{eq:EDP}
		\edp (x) := \sup \left\{ \langle w, x \rangle - \edp^{\star} (w) \,\middle|\, w \in (\R^{n})^{\ast} \right\}.
	\end{equation}
\end{definition}

Up to a minus sign, $\edp^{\star}$ is the logarithmic moment generating function (or cumulant generating function) of the Borel measure $\psi$ on $\R^{n}$ defined by
\begin{equation}
	\label{eq:psi_measure}
	\rd \psi(z) := \exp (- \Psi(z)) \, \rd z.
\end{equation}

It is often convenient to write $\edp^{\star}$ as an integral over the Euclidean unit sphere $\mathbb{S}^{n - 1} \subsetneq \R^{n}$ with respect to $(n - 1)$-dimensional Hausdorff measure $\mathcal{H}^{n - 1}$:
\begin{equation}
	\label{eq:EDDP-Spherical}
	\edp^{\star} (w) = \log \int_{\mathbb{S}^{n - 1}} \frac{(n - 1)!}{( \langle w, \omega \rangle + \Psi(\omega) )^{n}} \, \rd \mathcal{H}^{n - 1} (\omega).
\end{equation}

Note that $\edp$ and $\edp^{\star}$ are objects that are intrinsic to the dissipation, not the energetic structure:  they are determined entirely by the duality between $\R^{n}$ and $(\R^{n})^{\ast}$ and the dissipation potential $\Psi$ (or, equivalently, the geometry of the elastic region $\Elas$).  Proposition \ref{prop:EDDP} summarizes the important properties of the effective dual dissipation potential $\edp^{\star}$;  the proof is deferred to Section \ref{sec:proofs}.

\begin{figure}[t]
	\begin{center}
		\subfigure[$\edp^{\star}(w) = - \log ( 1 - w^{2} )$.]{
			\psfrag{w}{{\small $w$}}
			\psfrag{F}[c]{{\small $\edp^{\star}(w)$}}
			\includegraphics[width=0.45\textwidth]{./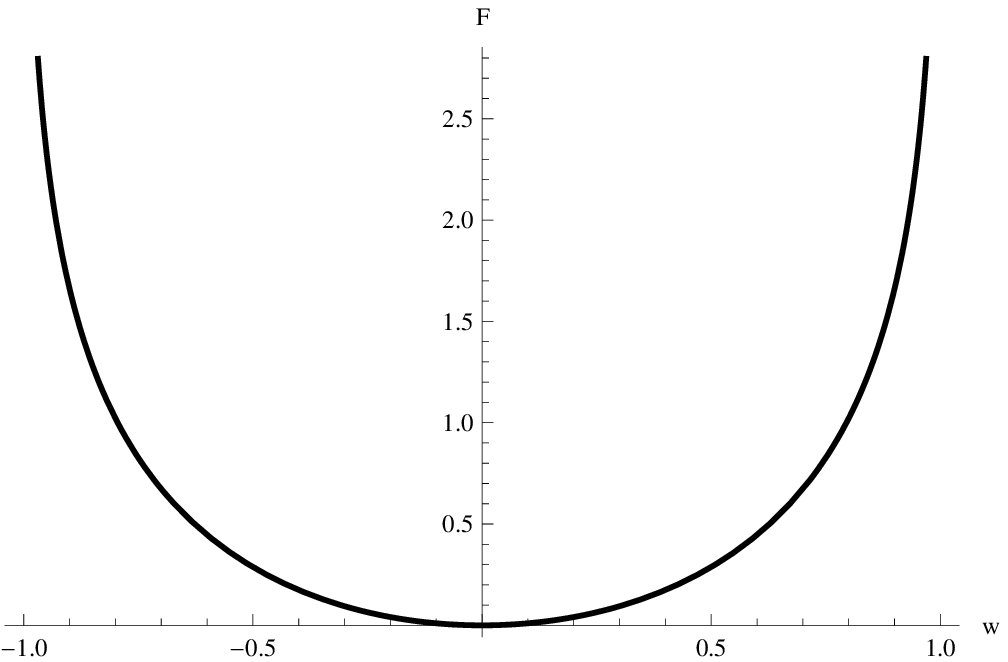}
		}
		\subfigure[$\Psi$ (dashed), and $\edp$ (solid).]{
			\psfrag{x}{{\small $x$}}
			\psfrag{Fs}[c]{{\small $\Psi(x), \edp(x)$}}
			\includegraphics[width=0.45\textwidth]{./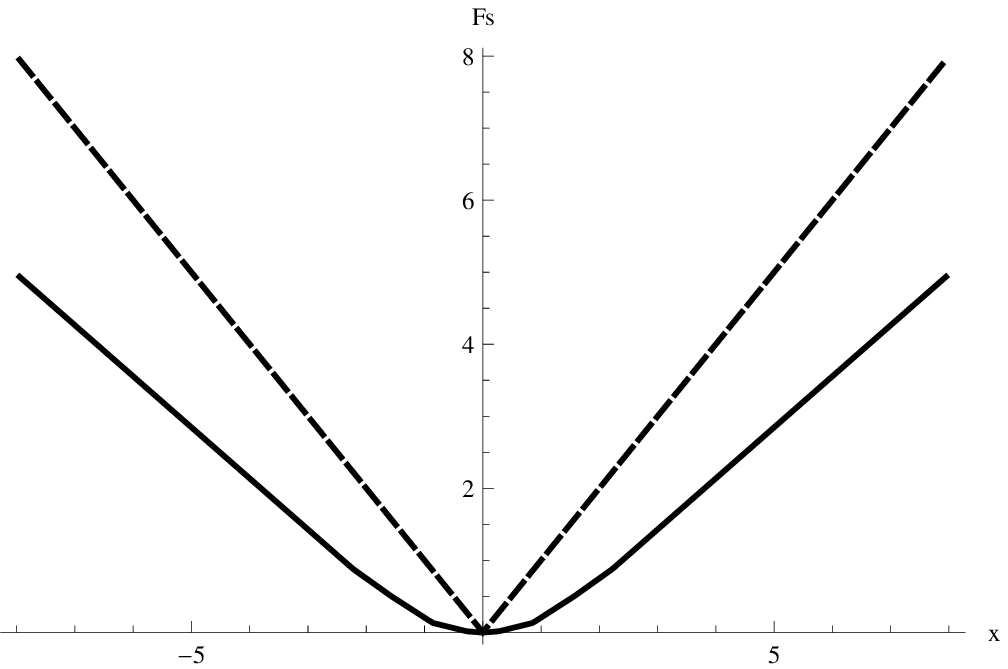}
		}
	  \caption{The effective (dual) dissipation potential in dimension one, with dissipation potential $\Psi(x) := | x |$.  Note the linear growth of $\edp$ for large $| x |$ and its approximate $2$-homogeneity near the origin.}
	  \label{fig:EDP}
	\end{center}
\end{figure}

\begin{proposition}
	\label{prop:EDDP}
	If $\Psi = \chi_{\Elas}^{\star} \colon \R^{n} \to \R$ satisfies \eqref{eq:Psi_cC}, then $\edp^{\star} \colon (\R^{n})^{\ast} \to [0, + \infty]$ defined as in \eqref{eq:EDDP} satisfies
	\begin{enumerate}
		\item $\edp^{\star}(w) > 0$ for all $w \in (\R^{n})^{\ast}$;
		\item $\edp^{\star}(w) < + \infty \iff -w \in \mathring{\Elas}$;
		\item $\edp^{\star}$ is convex on $(\R^{n})^{\ast}$;
		\item $\edp^{\star}$ is smooth on $- \mathring{\Elas}$;
		\item $\edp^{\star}(w)$ and $| \D \edp^{\star}(w) | \to + \infty$ as $-w \to \partial \Elas$.
	\end{enumerate}
\end{proposition}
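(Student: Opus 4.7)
My plan is to reduce each item in the proposition to the behaviour of the single function
\[
	a(w, \omega) := \langle w, \omega \rangle + \Psi(\omega), \qquad \omega \in \mathbb{S}^{n - 1},
\]
on the unit sphere, via the spherical representation \eqref{eq:EDDP-Spherical}. The positivity, zero set, and Lipschitz control of $a$ as $w$ moves inside, to the boundary of, or outside $-\Elas$ will dictate finiteness, smoothness, and blow-up of $\edp^{\star}$. The central observation is that, since $\Psi = \chi_{\Elas}^{\star}$ is the support function of $\Elas$, one has $a(w, \omega) > 0$ for every $\omega \in \mathbb{S}^{n - 1}$ if and only if $\langle -w, \omega \rangle < \Psi(\omega)$ for every $\omega$, which is in turn equivalent to $-w \in \mathring{\Elas}$.

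Claim (2) is the cornerstone. When $-w \in \mathring{\Elas}$, compactness of $\mathbb{S}^{n - 1}$ gives $a(w, \cdot) \geq \alpha > 0$ for some constant $\alpha$, so the spherical integrand $(n - 1)!/a^{n}$ is bounded and hence integrable; thus $\edp^{\star}(w) < +\infty$. When $-w \in \partial \Elas$, some $\omega_{0} \in \mathbb{S}^{n - 1}$ satisfies $a(w, \omega_{0}) = 0$; being the restriction to $\mathbb{S}^{n - 1}$ of a convex $1$-homogeneous function, $a(w, \cdot)$ is Lipschitz, so $a(w, \omega) \leq L \, \mathrm{dist}_{\mathbb{S}^{n - 1}}(\omega, \omega_{0})$ locally. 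In geodesic polar coordinates on $\mathbb{S}^{n - 1}$ around $\omega_{0}$ the integrand is then at least $r^{-n}$ against the $r^{n - 2}$ Jacobian, giving an $r^{-2}$ singularity and a divergent integral. When $-w \notin \Elas$, a separation argument yields $\omega_{0}$ with $a(w, \omega_{0}) < 0$, and the original integrand in \eqref{eq:EDDP} then grows exponentially along the ray $\lambda \omega_{0}$, forcing $\edp^{\star}(w) = +\infty$. Claim (1) is trivial where $\edp^{\star} = +\infty$ and follows from an elementary lower bound on the integrand (using the non-degeneracy \eqref{eq:Psi_cC}) elsewhere. Claim (3) is the standard convexity of log-moment generating functions: apply H\"older's inequality to
\[
	\exp\bigl(-\langle \lambda w_{1} + (1 - \lambda) w_{2}, \cdot\rangle\bigr) = \exp(-\langle w_{1}, \cdot\rangle)^{\lambda} \exp(-\langle w_{2}, \cdot\rangle)^{1 - \lambda}
\]
against the positive measure $\psi$ of \eqref{eq:psi_measure} and take logarithms.

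For Claim (4), fix $w \in -\mathring{\Elas}$ and a small closed ball $\overline{B} \subset -\mathring{\Elas}$ around it; then $\langle w', z \rangle + \Psi(z) \geq c \|z\|$ uniformly in $w' \in \overline{B}$ for some $c > 0$, so every polynomial-in-$z$ multiple of the integrand is dominated by $|z|^{k} \exp(-c\|z\|)$, which is integrable. Differentiation under the integral sign to all orders then gives that $Z(w) := \int \exp(-\langle w, z \rangle) \, \rd \psi$ is $C^{\infty}$ (indeed real-analytic) on $-\mathring{\Elas}$, and smoothness of $\edp^{\star} = \log Z$ follows since $Z > 0$ there. For Claim (5), $\edp^{\star}(w) \to +\infty$ as $-w \to \partial \Elas$ follows from Claim (2) by monotone convergence applied to the spherical integrands; then $|\D \edp^{\star}(w)| \to +\infty$ is a direct convex-analytic consequence. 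Indeed, the subgradient inequality at any fixed $w_{0} \in -\mathring{\Elas}$ yields
\[
	|\D \edp^{\star}(w)| \cdot |w - w_{0}| \geq \edp^{\star}(w) - \edp^{\star}(w_{0}) \to +\infty,
\]
while $|w - w_{0}|$ stays bounded because $\Elas$ is bounded, so $|\D \edp^{\star}(w)| \to +\infty$.

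The main obstacle I anticipate is the non-integrability argument in Claim (2) when $-w \in \partial \Elas$. One must verify that the Lipschitz upper bound on $a(w, \cdot)$ near its zero set really matches the linear order $\mathrm{dist}_{\mathbb{S}^{n - 1}}(\omega, \omega_{0})$ from above (rather than degenerating to higher order), so that the $r^{-2}$ singularity genuinely appears after the Jacobian is accounted for. This hinges on the convexity of $a(w, \cdot)$ together with its strict positivity off the face of $\Elas$ touching $-w$, and requires some care when that face is higher-dimensional. The other claims then reduce to standard convex-analysis and dominated-convergence manipulations.
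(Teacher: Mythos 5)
Your proposal is correct and follows essentially the same route as the paper's proof: the sign analysis of $\langle w, \omega \rangle + \Psi(\omega)$ on $\mathbb{S}^{n-1}$ is exactly the content of the paper's Lemma \ref{lem:InfLemma}, the spherical form \eqref{eq:EDDP-Spherical} with a linear upper bound on the integrand's denominator near its zero gives the same non-integrable singularity for $-w \in \partial \Elas$, and convexity and smoothness are obtained by the same H\"older and differentiation-under-the-integral arguments, with your subgradient-inequality derivation of $|\D \edp^{\star}(w)| \to +\infty$ being only a cosmetic variant of the paper's compactness contradiction. Two trivial touch-ups: in claim (5) the passage to the limit needs Fatou's lemma or an explicit quantitative estimate (as in the paper) rather than monotone convergence, since the spherical integrands are not monotone as $-w \to \partial \Elas$; and your anticipated obstacle in claim (2) is vacuous, because the divergence argument only requires an \emph{upper} bound of at most linear order on $a(w, \cdot)$ near its zero, and any higher-order degeneracy would only strengthen the singularity.
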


Proposition \ref{prop:EDDP} immediately implies that $\edp$ is smooth and strictly convex.  In some special cases of interest, $\edp^{\star}$ can be determined explicitly:
\begin{enumerate}
	\item Suppose that the elastic region $\Elas$ is a rectangular box with faces perpendicular to the coordinate axes in $(\R^{n})^{\ast}$:
	\begin{equation}
		\label{eq:elas_Manhattan}
		\Elas := \big\{ w = (w_{1}, \ldots, w_{n}) \in (\R^{n})^{\ast} \big| | w_{i} | \leq \sigma_{i} \text{ for } i = 1, \ldots, n \big\}.
	\end{equation}
	Then the dissipation potential $\Psi$ is the weighted $\ell^{1}$ ``Manhattan'' norm $\Psi(z) = \sum_{j = 1}^{n} \sigma_{j} | z^{j} |$ and
	\begin{equation}
		\label{eq:EDDP_Manhattan}
		\edp^{\star} (w) = - \sum_{i = 1}^{n} \log \big( \sigma_{i}^{2} - | w_{i} |^{2} \big).
	\end{equation}
	\item Suppose that the elastic region $\Elas$ is a Euclidean ball
	\begin{equation}
		\label{eq:elas_Euclidean}
		\Elas := \big\{ w = (w_{1}, \ldots, w_{n}) \in (\R^{n})^{\ast} \big| | w_{1} |^{2} + \dots + | w_{n} |^{2} \leq \sigma^{2} \big\}.
	\end{equation}
	Then the dissipation potential $\Psi$ is exactly $\sigma$ times the usual Euclidean norm and
	\begin{equation}
		\label{eq:EDDP_Euclidean}
		\edp^{\star} (w) = - \frac{n + 1}{2} \log \big( \sigma^{2} - | w |^{2} \big).
	\end{equation}
\end{enumerate}

\subsection{Convergence Theorem}

To the standing assumption that $\Psi = \chi_{\Elas}^{\star}$ satisfies \eqref{eq:pos_homo_deg_1} and \eqref{eq:Psi_cC}, we now add some assumptions on the energetic potential $E$.  $E \colon [0, T] \times \R^{n} \to \R$ is assumed to be bounded below, smooth in space with all derivatives uniformly bounded, and such that $(t, x) \mapsto \D E(t, x)$ is uniformly Lipschitz.  It is also assumed that $E$ is convex, and hence that the Hessian of $E$ is a non-negative operator.  Two further, more technical, assumptions are also required.  Both of these assumptions are satisfied in the prototypical case
\[
	E(t, x) := \tfrac{1}{2} \langle A x, x \rangle - \langle \ell(t), x \rangle,
\]
where $\ell \colon [0, T] \to (\R^{n})^{\ast}$ is Lipschitz, and $A \colon \R^{n} \to (\R^{n})^{\ast}$ is symmetric and non-negative.  In this case, the stable region at time $t \in [0, T]$ is the preimage
\[
	\Stab(t) = A^{-1} ( \ell(t) - \Elas )
\]
and is convex and closed for every $t$;  if $A$ is positive-definite, then $\Stab(t)$ is also bounded, and hence compact.  The prototypical case was examined in \cite{SullivanKoslowskiTheilOrtiz:2009};  the technical conditions that follow were introduced in \cite{Sullivan:2009}.

In order to control certain error terms in the proof of Lemma \ref{lem:ParabolicTightnessDiscrete}, which leads to Theorem \ref{thm:main}, a monotonicity assumption is used to ensure that these terms have the right sign regardless of their magnitude.  The requisite assumption is that
\begin{equation}
	\label{eq:NastyConvexityAssumption}
	\text{for all } t \in [0, T], x \mapsto \edp^{\star} (\D E(t, x)) \text{ is convex,}
\end{equation}
or, equivalently, that $\D \edp^{\star} (\D E(t, \cdot))$ is a monotone vector field for every $t \in [0, T]$.  This is a non-trivial assumption even if $E$ is strictly convex, as the example illustrated in Figure \ref{fig:NonConvexEDDP} shows.  Note also that \eqref{eq:NastyConvexityAssumption} presupposes that the set $\Stab(t)$ of stable states is convex for every $t \in [0, T]$, and that convexity of $E(t, \cdot)$ does not imply convexity of $\mathcal{S}(t)$ --- see \emph{e.g.}\ the kidney-shaped stable set of \cite[Example 5.5]{MielkeTheil:2004}.  Nevertheless, \eqref{eq:NastyConvexityAssumption} holds in the prototypical case, since $\D E(t, x) = A x - \ell(t)$ is an affine function and the composition of convex function with an affine one always yields a convex function \cite[\S3.2]{BoydVandenberghe:2004}.

\begin{figure}[t]
	\begin{center}
	  \subfigure[Comparison of $V$ (convex, dashed) and $\edp^{\star} \circ \D V$ (non-convex, solid).]
    {
			\psfrag{x}{{\small $x$}}
			\psfrag{e}[c]{{\small $V(x), \edp^{\star}(\D V(x))$}}
			\includegraphics[width=0.45\textwidth]{./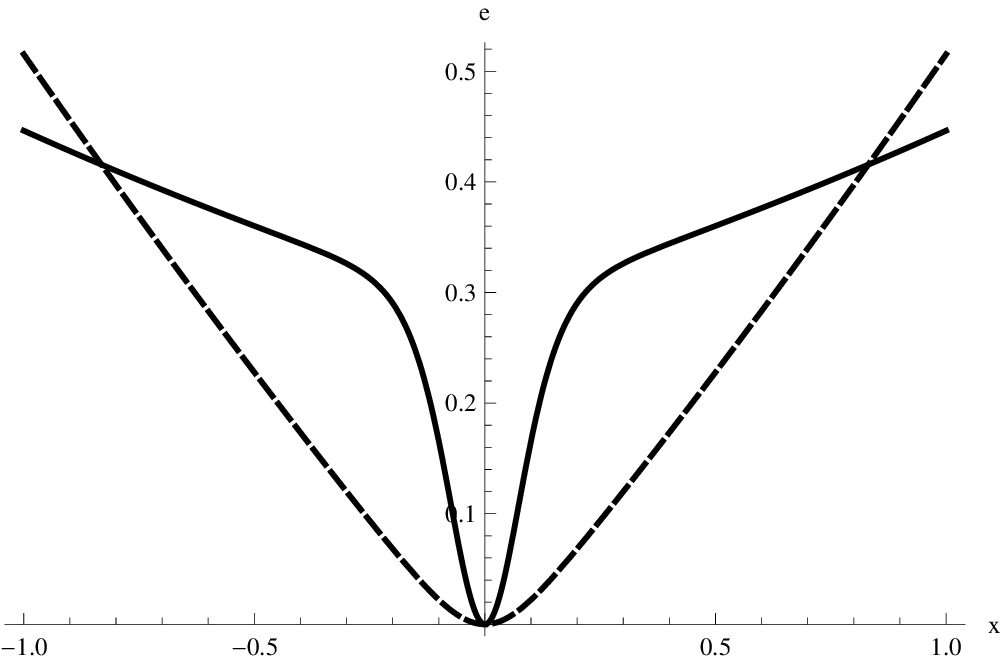}
    }
	  \subfigure[The potential gradient $\D V$.]
    {
			\psfrag{x}{{\small $x$}}
			\psfrag{de}[c]{{\small $\D V(x)$}}
			\includegraphics[width=0.45\textwidth]{./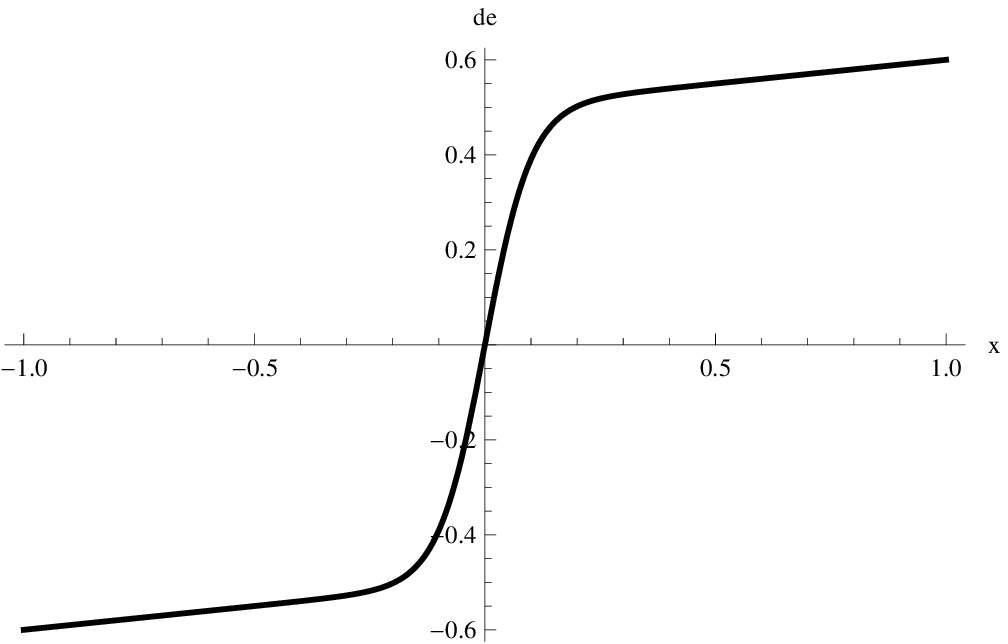}
    }
	  \caption[A convex potential $V$ for which $\edp^{\star} \circ \D V$ is not convex.]{An example of a strictly convex potential $V$ for which $\edp^{\star} \circ \D V$ is not convex.  In dimension $1$, consider $\Psi(x) := | x |$ and ${ V(x) := \tfrac{x^{2}}{20} + \tfrac{1}{20} \log \cosh ( 10 x ) }$.  In this case, $\edp^{\star}(w) = - \log (1 - w^{2})$ and $\D V(x) = \tfrac{1}{2} \tanh (10 x) + \tfrac{1}{10} x$. The composition $x \mapsto \edp^{\star}(\D V(x))$ is evidently non-convex, although it is quasiconvex (\emph{i.e.}\ it has convex sublevel sets).}
	  \label{fig:NonConvexEDDP}
	\end{center}
\end{figure}

It is also necessary to place an implicit constraint on the time-dependency of $E$.  The problem to be avoided is that all the estimates for the moments of the increments $\Delta X_{i + 1}^{(P)}$ blow up as $- \D E(t_{i}, X_{i}^{(P)})$ approaches the yield surface $\partial \Elas$.  The situation to be avoided can be expressed neatly in terms of the proposed limiting deterministic process and the effective dual dissipation potential.  Therefore, we impose the following finite energy criterion:
\begin{equation}
	\label{eq:MarkovLim-Convex-USC}
	\left.
	\begin{matrix}
		\mathcal{T} \text{ an interval of time starting at } 0, \\
		y(0) \text{ such that } - \D E(0, y(0)) \in \mathring{\Elas}, \\
		\dot{y} = - \theta \D \edp^{\star} (\D E(t, y)) \text{ on } \mathcal{T}
	\end{matrix}
	\right\}
	\implies \sup_{t \in \mathcal{T}} \edp^{\star} (\D E(t, y(t))) < + \infty.
\end{equation}

Condition \eqref{eq:MarkovLim-Convex-USC} is somewhat implicit, but appears to be unavoidable if energies that have neither identically zero nor constant positive-definite Hessian are to be considered.  If $E$ is of the prototypical quadratic form with $A = 0$, then \eqref{eq:MarkovLim-Convex-USC} holds if the applied load $\ell$ is never greater than the dissipation, \emph{i.e.}\ if
\[
	\inf_{t \in [0, T]} \inf_{| z | = 1} \left( \Psi(z) - \ell(t) \cdot z \right) > 0.
\]
(\emph{N.B.}  In this case of ``small $\ell$'', the rate-independent process is static but the thermalized process \eqref{eq:limiting_gd1} is not.)  If $A$ is positive-definite, then \eqref{eq:MarkovLim-Convex-USC} always holds whenever the initial condition satisfies $y(0) \in \mathring{\Stab}(0) = A^{-1} ( \ell(0) - \Elas )$.  Indeed, for not-necessarily-quadratic energies, uniform convexity of $E$ implies the condition \eqref{eq:MarkovLim-Convex-USC}:

\begin{lemma}
	\label{lem:unif_cvx_implies_good}
	Suppose that there exists $\gamma_{E} > 0$ such that
	\[
		\langle \D^{2} E(t, x) , v \otimes v \rangle \geq \gamma_{E} | v |^{2} \text{ for all $t \in [0, T)$ and all $x, v \in \R^{n}$,}
	\]
	and that $\| \partial_{t} \D E \|_{L^{\infty}} < + \infty$.  Then \eqref{eq:MarkovLim-Convex-USC} holds.
\end{lemma}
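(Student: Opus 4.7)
The plan is to track the scalar quantity $F(t) := \edp^{\star}(\D E(t, y(t)))$ along an arbitrary solution $y$ of the limiting ODE on the given interval $\mathcal{T} \subseteq [0, T]$, and to show that $\dot F$ is bounded above by a constant independent of $t$, from which $F(t) \leq F(0) + \mathrm{const}\cdot T$ gives the required uniform bound.

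The first observation is that, on $\mathcal{T}$, the ODE $\dot y = -\theta \D \edp^{\star}(\D E(t, y))$ is only well-defined if $-\D E(t, y(t)) \in \mathring{\Elas}$ for all $t \in \mathcal{T}$ (by Proposition \ref{prop:EDDP}(2), (5)), and on this open set $\edp^{\star}$ is smooth by Proposition \ref{prop:EDDP}(4). Hence $F$ is differentiable, and the chain rule together with the ODE yields
\begin{equation*}
    \dot F(t) = \langle p(t), \partial_{t} \D E(t, y(t)) \rangle - \theta \langle p(t), \D^{2} E(t, y(t)) \, p(t) \rangle,
\end{equation*}
where $p(t) := \D \edp^{\star}(\D E(t, y(t)))$.

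The main step is then to bound each term. Cauchy--Schwarz and the hypothesis give $|\langle p, \partial_{t} \D E \rangle| \leq C_{0} |p|$ with $C_{0} := \| \partial_{t} \D E \|_{L^{\infty}} < + \infty$, while uniform convexity supplies $\langle p, \D^{2} E \, p \rangle \geq \gamma_{E} |p|^{2}$. Combining these and maximizing the resulting quadratic over $|p| \geq 0$,
\begin{equation*}
    \dot F(t) \leq C_{0} |p(t)| - \theta \gamma_{E} |p(t)|^{2} \leq \frac{C_{0}^{2}}{4 \theta \gamma_{E}}.
\end{equation*}
Since $-\D E(0, y(0)) \in \mathring{\Elas}$, Proposition \ref{prop:EDDP}(2) gives $F(0) < +\infty$; integrating the differential inequality on $\mathcal{T}$ and using $\mathcal{T} \subseteq [0, T]$ yields
\begin{equation*}
    \sup_{t \in \mathcal{T}} F(t) \leq F(0) + \frac{T C_{0}^{2}}{4 \theta \gamma_{E}} < +\infty,
\end{equation*}
which is exactly \eqref{eq:MarkovLim-Convex-USC}.

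The only step that could be delicate is the application of the chain rule at points where $F$ might be close to blowing up, i.e.\ where $\D E(t, y(t))$ approaches $-\partial \Elas$; but the hypothesis that $\dot y = -\theta \D \edp^{\star}(\D E(t, y))$ holds on all of $\mathcal{T}$ automatically keeps the trajectory in the smoothness region of $\edp^{\star}$, so no further justification is needed. The mechanism of the proof is that the uniform convexity produces a quadratic-in-$|p|$ dissipative term which always dominates the linear-in-$|p|$ forcing from $\partial_{t} \D E$, preventing $F$ from growing faster than linearly in time.
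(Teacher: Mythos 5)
Your proof is correct, and its core is identical to the paper's: the chain-rule identity for $\tfrac{\rd}{\rd t}\,\edp^{\star}(\D E(t,y(t)))$ along the flow, followed by the two bounds $\langle \D^{2}E, p^{\otimes 2}\rangle \geq \gamma_{E}|p|^{2}$ and $|\langle p, \partial_{t}\D E\rangle| \leq \|\partial_{t}\D E\|_{L^{\infty}}|p|$ with $p = \D\edp^{\star}(\D E(t,y))$. Where you genuinely diverge is in how the resulting differential inequality is closed out. The paper argues by contradiction: it invokes Proposition \ref{prop:EDDP}(5) to say that if $\edp^{\star}$ were to blow up along the trajectory then $|\D\edp^{\star}|$ would blow up as well, and then observes (via the mean value theorem) that the inequality forces $\edp^{\star}$ to be decreasing wherever $|\D\edp^{\star}|$ is large, which is incompatible with blow-up. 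You instead complete the square to get the uniform bound $\dot F \leq C_{0}^{2}/(4\theta\gamma_{E})$ and integrate. Your route is the more elementary and more quantitative of the two: it bypasses Proposition \ref{prop:EDDP}(5) entirely and delivers the explicit estimate $\sup_{t\in\mathcal{T}} F(t) \leq F(0) + T C_{0}^{2}/(4\theta\gamma_{E})$, where the paper's argument yields only finiteness. Your remark that the validity of the ODE on all of $\mathcal{T}$ already confines the trajectory to the region where $\edp^{\star}$ is smooth (so the chain rule is legitimate throughout) is the right justification, and it is the same tacit assumption the paper makes. One cosmetic point: you carry the factor $\theta$ from the ODE through the dissipative term correctly, whereas the paper's displayed computation silently drops it; this does not affect either argument.
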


See Section \ref{sec:proofs} for the proof of Lemma \ref{lem:unif_cvx_implies_good}.  The main result is now as follows:

\begin{theorem}
	\label{thm:main} %
	Suppose that $E$, $\Psi$ satisfy the hypotheses above, including \eqref{eq:NastyConvexityAssumption} and \eqref{eq:MarkovLim-Convex-USC}, and fix $\theta > 0$.  Then, as $[P] \to 0$, the piecewise constant c{\`a}dl{\`a}g interpolants of $X^{(P)}$ with $\ep_{i} = \theta \Delta t_{i}$ converge in probability in the uniform norm to the deterministic non-linear gradient descent $y \colon [0, T] \to \R^{n}$ satisfying \eqref{eq:limiting_gd1}, with the same initial condition $X_{0} = y(0) = x_{0} \in \mathring{\Stab}(0)$.  More precisely, for any $T > 0$, $\eta > 0$, there exists a constant $C \geq 0$ such that, for all small enough $[P]$,
	\begin{equation}
		\label{eq:MarkovLim-Convex-Estimate}
		\mathbb{P} \left[ \sup_{t \in [0, T]} \big| X^{(P)}(t) - y(t) \big| \geq \eta \right] \leq C [P]^{1/2}.
	\end{equation}
\end{theorem}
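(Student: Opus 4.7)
The plan is to realize $X^{(P)}$ as a perturbed Euler discretization of the limiting ODE \eqref{eq:limiting_ode} and to control its deviation from the deterministic solution $y$ by combining a discrete energy (\emph{Gronwall}) estimate with a maximal martingale inequality. Applying Lemma \ref{lem:Markov-ExpAndMoments} under the scaling $\ep_{i+1} = \theta \Delta t_{i+1}$, I decompose the chain as
\[
X^{(P)}_{i+1} = X^{(P)}_i - \theta\,\Delta t_{i+1}\, \D \edp_{\ep_{i+1}}^{\star}\!\bigl(\D E(t_{i+1}, X^{(P)}_i)\bigr) + M_{i+1},
\]
where $(M_{i+1})$ is a martingale-difference sequence with respect to the natural filtration $(\mathcal{F}_i)$, and the moment bound in that lemma gives $\E[|M_{i+1}|^2 \mid \mathcal{F}_i] = O(\ep_{i+1}^2)$ uniformly on any region where $-\D E(t, x)$ lies compactly inside $\mathring{\Elas}$. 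By \eqref{eq:MarkovLim-Convex-USC} the ODE \eqref{eq:limiting_ode} has a global solution $y$ on $[0,T]$ and $-\D E(t, y(t))$ stays in some compact $K \subset \mathring{\Elas}$; on a slight enlargement of $K$, Proposition \ref{prop:EDDP} makes $\D \edp^{\star}$ smooth and Lipschitz, and a direct comparison of the integral representations \eqref{eq:EDDP} and \eqref{eq:Psitilde_eps} yields $\D \edp_{\ep}^{\star} = \D \edp^{\star} + O(\ep)$ uniformly there.

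Next I introduce a stopping time $\tau^{(P)}$ at which $-\D E(t_i, X^{(P)}_i)$ first leaves a compact enlargement of $K$, and work on the good event $G^{(P)} := \{\tau^{(P)} > T\}$; the role of Lemma \ref{lem:ParabolicTightnessDiscrete} is precisely to supply the bound $\mathbb{P}[G^{(P)c}] = O([P]^{1/2})$ from iterating the moment estimate of Lemma \ref{lem:Markov-ExpAndMoments}. On $G^{(P)}$ I run a discrete energy estimate on $\mathcal{E}_i := |X^{(P)}_i - y(t_i)|^2$. Expanding $\mathcal{E}_{i+1}$, substituting the drift-plus-martingale decomposition for $\Delta X^{(P)}_{i+1}$, and using $y(t_{i+1}) - y(t_i) = -\theta\int_{t_i}^{t_{i+1}} \D \edp^{\star}(\D E(s, y(s)))\,\rd s$, the increment $\mathcal{E}_{i+1} - \mathcal{E}_i$ splits into four contributions: a \emph{monotonicity term}
\[
-2 \theta \Delta t_{i+1} \bigl\langle X^{(P)}_i - y(t_i),\, \D \edp^{\star}(\D E(t_{i+1}, X^{(P)}_i)) - \D \edp^{\star}(\D E(t_{i+1}, y(t_i)))\bigr\rangle \le 0,
\]
which is non-positive by the convexity assumption \eqref{eq:NastyConvexityAssumption}; an $O(\ep_{i+1} \Delta t_{i+1}\,|X^{(P)}_i - y(t_i)|) \le C(\Delta t_{i+1}^2 + \Delta t_{i+1}\mathcal{E}_i)$ contribution coming from replacing $\edp_{\ep}^{\star}$ by $\edp^{\star}$; an $O(\Delta t_{i+1}^2 + \Delta t_{i+1}\mathcal{E}_i)$ contribution coming from Lipschitz-in-$t$ regularity of $\D \edp^{\star}(\D E(\cdot, y(\cdot)))$ in passing from the integral to its right-endpoint value; and a pure martingale term $2\langle X^{(P)}_i - y(t_i), M_{i+1}\rangle + |M_{i+1}|^2 + \text{(cross)}$ whose partial-sum supremum Doob's maximal inequality bounds in $L^2$ by $O([P]^{1/2})$ after using $\sum_i \E[|M_{i+1}|^2] = O([P])$. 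A discrete Gronwall inequality then gives $\E[\sup_{i \le N} \mathcal{E}_i\,\mathbf{1}_{G^{(P)}}] = O([P])$, and Markov's inequality combined with the tightness bound on $\mathbb{P}[G^{(P)c}]$ yields \eqref{eq:MarkovLim-Convex-Estimate}.

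The main obstacle is the tightness step: proving that $-\D E(t_i, X^{(P)}_i)$ does not approach the yield surface $\partial\Elas$ prematurely. This is where \eqref{eq:NastyConvexityAssumption} is indispensable, since the Lipschitz constant of $\D \edp^{\star}$ blows up near $\partial\Elas$ (item (v) of Proposition \ref{prop:EDDP}) and an unsigned cross term in the Gronwall estimate cannot be absorbed by a local Lipschitz bound; the monotonicity of $\D \edp^{\star}(\D E(t,\cdot))$ instead gives the cross term the correct sign \emph{regardless} of the size of its constituents, which also feeds back into the tightness estimate by preventing the chain from excursions toward $\partial\Elas$ that would violate the moment bounds.
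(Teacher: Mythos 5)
Your proposal is correct in its essentials and uses the same key ingredients as the paper --- the drift--martingale decomposition supplied by Lemma \ref{lem:Markov-ExpAndMoments}, the sign of the cross term coming from the monotonicity of $\D\edp^{\star}(\D E(t,\cdot))$ under \eqref{eq:NastyConvexityAssumption}, and localization to a compact subset of $-\mathring{\Elas}$ guaranteed by \eqref{eq:MarkovLim-Convex-USC} --- but it packages them differently. The paper splits the error into three separate comparisons: the chain versus the Euler scheme for the $\edp_{\ep}^{\star}$-flow (Lemma \ref{lem:ParabolicTightnessDiscrete}, which is a full discrete Gronwall-type argument, not merely a tightness bound), the Euler scheme versus the exact $\edp_{\ep}^{\star}$-flow (standard $O([P])$), and the $\edp_{\ep}^{\star}$-flow versus the $\edp^{\star}$-flow (Lemma \ref{lem:FepsToF0Flows}, $O(\ep^{1/2})$, which is the source of the exponent $1/2$ in \eqref{eq:MarkovLim-Convex-Estimate}). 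You instead compare $X^{(P)}$ directly to $y$ in a single energy estimate, folding the $\edp_{\ep}^{\star}$-versus-$\edp^{\star}$ discrepancy in as a Young-absorbable error, and you control the supremum of the martingale part by Doob's $L^{2}$ maximal inequality, where the paper uses a union bound over the $T/h$ steps together with a fourth-moment Chebyshev estimate (the fourth moments are needed precisely because the union bound costs a factor $1/h$; your Doob route avoids that at the price of the mild circularity that $\langle Z_{i}, M_{i+1}\rangle$ involves $Z_{i}$, which a two-pass or stopped argument resolves).

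Two soft spots. First, you assert $\D\edp_{\ep}^{\star}=\D\edp^{\star}+O(\ep)$ by ``direct comparison''; the paper only proves $O(\ep^{1/2})$ (Lemma \ref{lem:FepsToF0}), and the estimate is not trivial because the perturbation $\exp(-\tfrac{\ep}{2}\langle \D^{2}E\,z,z\rangle-\cdots)$ is not uniformly close to $1$ in $z$ --- the paper spends real effort splitting the integral at a radius $R(\ep)$. Your stronger rate is not load-bearing (Young's inequality absorbs either rate into the Gronwall estimate), but it should not be claimed without proof. Second, and more importantly, your logical order on the tightness step is backwards: you invoke $\mathbb{P}[G^{(P)c}]=O([P]^{1/2})$ as an input ``from iterating the moment estimate of Lemma \ref{lem:Markov-ExpAndMoments}'', but the moment estimates alone cannot keep $-\D E(t_{i},X_{i})$ away from $\partial\Elas$; the exit bound must come \emph{out of} the stopped Gronwall estimate (exit before $T$ forces $|Z_{i}|\geq\delta$ at the exit step, whose probability the stopped moment bound controls), exactly as the paper's conditioning on the events $\mathcal{K}_{i}$ does. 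Your closing paragraph shows you understand this feedback, so the fix is a reordering rather than a new idea, but as written the argument is circular at that point.
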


\begin{proof}
	The claim follows from the standard $O([P])$ global error bound for deterministic Euler schemes, the $O(\ep^{1/2})$ estimate of Lemma \ref{lem:FepsToF0Flows}, and the $O([P])$ estimate of Lemma \ref{lem:ParabolicTightnessDiscrete}.
\end{proof}

An illustrative comparison of the original rate-independent evolution and the effect of the heat bath is given in Figure \ref{fig:RI-Relaxed}.  Note that when $\theta$ is large (which corresponds to the heat bath being very hot), $\dot{y}(t) / \theta$ typically lies in the region of $\R^{n}$ close to the origin where $\edp$ is approximately $2$-homogeneous;  when $\theta$ is small (which corresponds to the heat bath being cold), $\dot{y}(t) / \theta$ typically lies in the region of $\R^{n}$ far from the origin where $\edp$ is approximately $1$-homogeneous.  Indeed, as $\theta \to 0$, the original rate-independent dynamics are recovered.

\begin{figure}[t]
	\begin{center}
	  \subfigure[A relatively ``hot'' thermalized gradient descent with $\theta = 1$.]
    {
			\psfrag{t}{{\small $t$}}
			\psfrag{x}[c]{{\small $y (t), z(t)$}}
			\includegraphics[width=0.45\textwidth]{./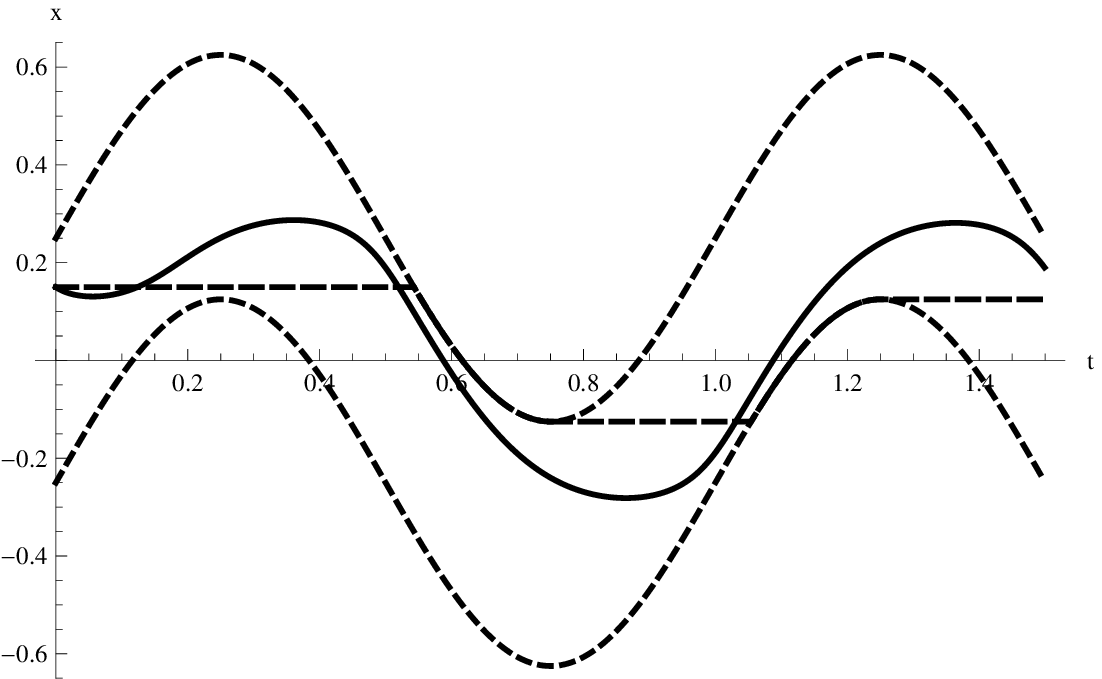}
    }
	  \subfigure[A relatively ``cold'' thermalized gradient descent with $\theta = 0.1$.]
    {
			\psfrag{t}{{\small $t$}}
			\psfrag{x}[c]{{\small $y (t), z(t)$}}
			\includegraphics[width=0.45\textwidth]{./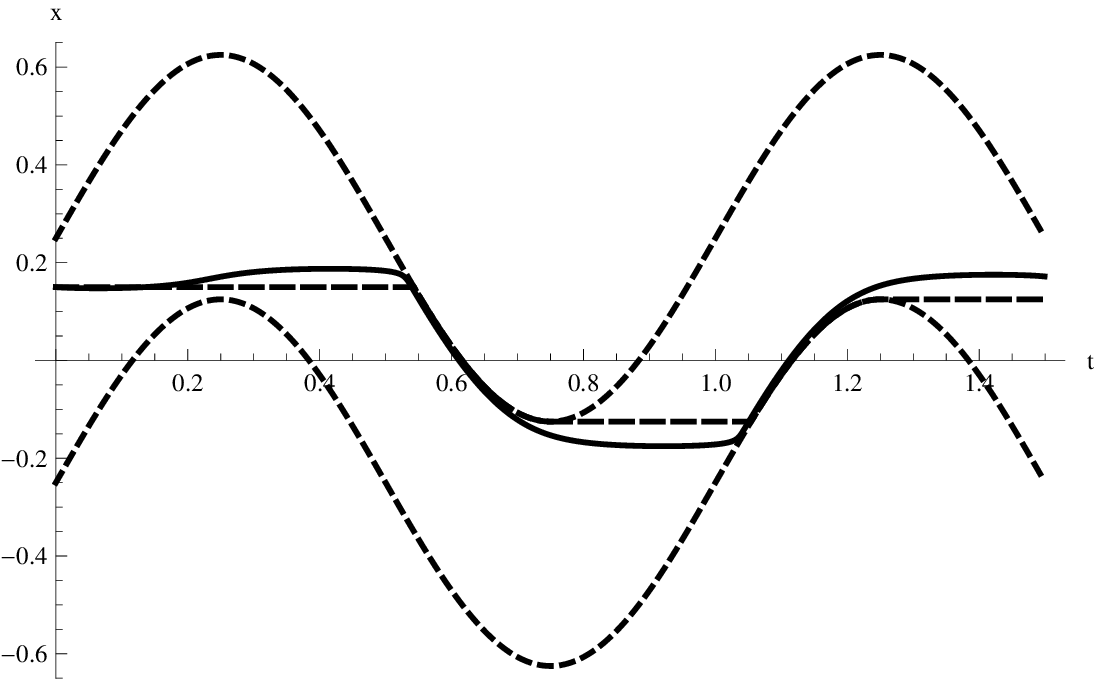}
    }
	  \caption[Comparison of the rate-independent process and the effect of the heat bath.]{Comparison of the original rate-independent evolution ($z$, dashed) and the effect of the heat bath ($y$, solid).  The frontier of the stable region is shown dotted.  Parameters:  $\Psi(x) = 2 | x |$, $E(t, x) = 4 | x |^{2} - \langle 3 \sin 2 \pi t, x \rangle$, initial condition $0.15$.}
	  \label{fig:RI-Relaxed}
	\end{center}
\end{figure}

\section{Conclusions and Outlook}
\label{sec:conclusions}

There are three natural directions in which the results of this paper could be generalized.  First and most obviously, the smoothness, convexity and other structural assumptions on $E$ could be relaxed:  so far, the various error terms in the proof of Theorem \ref{thm:main} have been controlled by convexity and conditions like \eqref{eq:NastyConvexityAssumption} and \eqref{eq:MarkovLim-Convex-USC};  in principle, so long as those error terms can be controlled (at least locally in time and space), Theorem \ref{thm:main} should generalize to the non-convex case.  This would be a very interesting generalization, since the solution to the rate-independent problem in a non-convex energetic potential $E$ is not always unique;  the thermalization procedure could provide a selection principle if the thermalized process has a unique limit as $\theta \to 0$.

Secondly, since most rate-independent processes of interest are infinite-\linebreak[4]dimensional, or even posed on spaces that lack a linear structure, more general state spaces than $\R^{n}$ could be considered.  This is a potentially subtle topic, since in infinite-dimensional settings there is no obvious candidate for a reference measure with respect to which to take densities to calculate the entropy in \eqref{eq:IP-Cost-2}.  As noted in \cite[Theorem 5.3.5]{Sullivan:2009}, the Markov chains of study are \emph{not} invariant under change of reference measure:  the logarithm of the Radon--Nikod{\'y}m derivative of the change of measure acts as an additive perturbation of the energetic potential.  The calculations of Section \ref{sec:heuristics} are quite interesting in general:  they amount to a study of the tangent measures (in the sense of \cite{Preiss:1987} \emph{\& al.}) of the Gibbsian distribution \eqref{eq:Gibbsian_pdf}.

Thirdly, the limiting result of Theorem \ref{thm:main} should be seen as a first-order approximation that is valid for small positive ``temperatures'' $\theta$.  It would be interesting to examine the behaviour of a suitable rescaling of $X - y$ and determine whether it obeys, say, the large deviations principle with respect to a suitable rate function.

\section{Proofs and Supporting Results}
\label{sec:proofs}

\begin{lemma}
	\label{lem:InfLemma}
	Let $\Psi \colon \R^{n} \to [0, +\infty)$ be one-homogeneous, continuous, and non-degenerate as in \eqref{eq:pos_homo_deg_1}--\eqref{eq:Psi_cC}.  Let $m \colon (\R^{n})^{\ast} \to \R$ be given by
	\[
		m(v) := \inf \left\{ \langle v, u \rangle + \Psi(u) \,\middle|\, u \in \mathbb{S}^{n - 1} \right\},
	\]
	where $\mathbb{S}^{n - 1} \subsetneq \R^{n}$ denotes the Euclidean unit sphere.  Then $m$ is continuous and
	\[
		m(v)
		\begin{cases}
			> 0, & \text{if $- v \in \mathring{\Elas}$,} \\
			= 0, & \text{if $- v \in \partial \Elas$,} \\
			< 0, & \text{if $- v \not\in \Elas$.}
		\end{cases}
	\]
\end{lemma}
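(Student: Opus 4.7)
The plan is to rewrite $m(v)$ in terms of the support function of $\Elas$ and then split according to the position of $-v$ relative to $\Elas$. Since $\Psi = \chi_{\Elas}^{\star}$ yields $\Psi(u) = \sup_{\ell \in \Elas}\langle \ell, u\rangle$, setting $w := -v$ gives
\[
	\langle v, u\rangle + \Psi(u) = \sup_{\ell \in \Elas} \langle \ell - w, u\rangle,
\]
so $m(v)$ is the infimum over the unit sphere of the support function of $\Elas - w$, whose sign detects exactly whether $0$ lies in the interior, boundary, or exterior of $\Elas - w$. Continuity is a free-standing observation before the sign analysis: the family $v \mapsto \langle v, u\rangle + \Psi(u)$, $u \in \mathbb{S}^{n-1}$, is uniformly $1$-Lipschitz in $v$ (since $|u| = 1$), hence so is its pointwise infimum $m$.

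For the sign trichotomy I would treat the three cases in turn. If $w \in \mathring{\Elas}$, choose $\delta > 0$ with $\overline{B_{\delta}(w)} \subset \Elas$; then for any $u \in \mathbb{S}^{n-1}$ the point $w + \delta u$ lies in $\Elas$, so $\Psi(u) \geq \langle w + \delta u, u\rangle = \langle w, u\rangle + \delta$, giving $\langle v, u\rangle + \Psi(u) \geq \delta$ uniformly in $u$ and hence $m(v) \geq \delta > 0$. If $w \notin \Elas$, strict Hahn--Banach separation of the point $w$ from the closed convex set $\Elas$ produces $u_{0} \in \mathbb{S}^{n-1}$ with $\Psi(u_{0}) = \sup_{\ell \in \Elas}\langle \ell, u_{0}\rangle < \langle w, u_{0}\rangle$, so $\langle v, u_{0}\rangle + \Psi(u_{0}) < 0$ and $m(v) < 0$. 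Finally, if $w \in \partial\Elas$, then closedness gives $w \in \Elas$, so $\Psi(u) \geq \langle w, u\rangle$ for every $u$ and $m(v) \geq 0$; conversely, a supporting hyperplane to $\Elas$ at its boundary point $w$ yields $u_{0} \in \mathbb{S}^{n-1}$ with $\langle \ell, u_{0}\rangle \leq \langle w, u_{0}\rangle$ for all $\ell \in \Elas$, whence $\Psi(u_{0}) \leq \langle w, u_{0}\rangle$ and $m(v) \leq 0$; combining yields $m(v) = 0$.

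No step looks genuinely hard; the only care required is to invoke the correct version of convex separation in each case, namely strict separation of a point from a closed convex set for the exterior case, and existence of a supporting hyperplane at a boundary point for the boundary case. Both apply without extra hypotheses because, by \eqref{eq:DissElas}, $\Elas$ is bounded, closed, convex, and has $0$ as an interior point, so the convex-geometric ingredients are all standard in finite dimensions.
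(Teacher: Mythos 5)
Your proof is correct, and it takes a genuinely different and arguably cleaner route than the paper's. The paper establishes continuity of $m$ by combining upper semi-continuity (infimum of a family of continuous functions) with lower semi-continuity (compactly parametrized family, citing Penot--Thera), and it handles the sign trichotomy by first invoking the convex-duality identity $\inf_{x} (\langle v, x\rangle + \Psi(x)) = -\chi_{\Elas}(-v)$ to get the coarse dichotomy, and then uses $1$-homogeneity plus scaling arguments ($-\alpha v$ with $\alpha > 1$) together with the already-established continuity of $m$ to pin down the interior and boundary cases by contradiction. Your argument instead reads $\langle v, u\rangle + \Psi(u)$ as the support function of the translate $\Elas - w$ and then runs three direct, self-contained convex-geometry arguments: a $\delta$-ball estimate for the interior, strict point-set separation for the exterior, and the existence of a supporting hyperplane combined with membership in the closed set $\Elas$ for the boundary. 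Your continuity step is also simpler: the family $v \mapsto \langle v, u\rangle + \Psi(u)$, $u \in \mathbb{S}^{n-1}$, is uniformly $1$-Lipschitz, so $m$ is $1$-Lipschitz, with no need to invoke semi-continuity machinery or a compactness-based citation. What the paper's approach buys is that continuity is used as a logical input to the boundary case, making the scaling argument work; what yours buys is a fully constructive, citation-free proof where each case stands on its own. Both are sound.
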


\begin{proof}
	To save space, write $f(v, x) := \langle v, x \rangle + \Psi(x)$.  Since $\chi_{\Elas}$ is convex and lower semi-continuous,
	\begin{equation}
		\label{eq:InfLemma-eq1}
		\inf_{x \in \R^{n}} f(v, x) = - \Psi^{\star} (- v) = - \chi_{\Elas} (- v).
	\end{equation}
	Since $f(v, x)$ is $1$-homogeneous in $x$, it follows that $m(v) < 0$ for $- v \not\in \Elas$ and that $m(v) \geq 0$ for $- v \in \Elas$.  

	Note that $f$ is continuous.  Since $m$ is a pointwise infimum of a family of continuous functions, it is upper semi-continuous.  Since $\mathbb{S}^{n - 1}$ is compact, $m$ is a pointwise infimum of a compactly-parametrized family of continuous functions, and so is also lower semi-continuous \cite{PenotThera:1982}.  Thus, $m$ is continuous.

	Suppose that there exists $- v \in \mathring{\Elas}$ with $m(v) = 0$.  By the compactness of $\mathbb{S}^{n - 1}$, this implies that there exists a unit vector $u_{0} \in \mathbb{S}^{n - 1}$ with
	\begin{equation}
		\label{eq:InfLemma-eq2}
		f(v, u_{0}) = 0.
	\end{equation}
	But, since $- v \in \mathring{\Elas}$, there exists $\alpha > 1$ such that $- \alpha v \in \mathring{\Elas}$.  Then \eqref{eq:InfLemma-eq2} implies that $\langle v, u_{0} \rangle < 0$, so $f(\alpha v, u_{0}) < 0$, and so $m(\alpha v) < 0$, which contradicts \eqref{eq:InfLemma-eq1}.  Hence, $m(v) > 0$ for $- v \in \mathring{\Elas}$.

	It remains only to show that $m(v) = 0$ for $- v \in \partial \Elas$.  Suppose not, \emph{i.e.}\ that there exists $- v \in \partial \Elas$ with $m(v) > 0$.  Since $- v \in \partial \Elas$ and $\Elas$ is convex (and hence star-convex with respect to the origin in $(\R^{n})^{\ast}$), for every $\alpha > 1$, $- \alpha v \not \in \Elas$, and so $m(\alpha v) < 0$.  Hence, by the continuity of $m$,
	\[
		0 \geq \lim_{\alpha \searrow 1} m(\alpha v) = m(v) > 0,
	\]
	which is a contradiction.  This completes the proof.
\end{proof}

\begin{proof}[Proof of Proposition \ref{prop:EDDP}.]
	\label{pf:thm:EDDP}
	Let $\psi$ denote the Borel measure on $\R^{n}$ defined by \eqref{eq:psi_measure}.
	\begin{enumerate}
		\item By \eqref{eq:Psi_cC}, $\psi$ is a strictly positive and finite measure.  Hence, since the exponential function in the integrand of \eqref{eq:EDDP} is never zero, the claim follows.
	
		\item Consider the spherical integral form \eqref{eq:EDDP-Spherical} for $\edp^{\star}$. 	By Lemma \ref{lem:InfLemma}, if $- w \in \mathring{\Elas}$, then the integral is that of a continuous and bounded function over a compact set, so the integral exists and is finite.
	
		If $- w \in \partial \Elas$, then, as in the proof of Lemma \ref{lem:InfLemma}, there exists $u_{w} \in \mathbb{S}^{n - 1}$ with $\langle w, u_{w} \rangle + \Psi(u_{w}) = 0$, so the integrand has a pole.  The triangle inequality for $\Psi$ implies that for $u_{w} + u \in \R^{n}$,
		\begin{align*}
			\langle w, u_{w} + u \rangle + \Psi(u_{w} + u)
			& \leq \langle w, u_{w} \rangle + \langle w, u \rangle + \Psi(u_{w}) + \Psi(u) \\
			& = \langle w, u \rangle + \Psi(u) \\
			& \leq | u | \big( | w | + C_{\Psi} \big).
		\end{align*}
		Hence, the integrand in \eqref{eq:EDDP-Spherical} grows more quickly than $|u|^{-n}$ as $|u| \to 0$;  hence, by the standard result that $x \mapsto | x |^{- \alpha}$ lies in $L^{1}$ for a $d$-dimensional domain about $0$ if, and only if, $\alpha < d$, it follows that $\edp^{\star} (w) = + \infty$.
	
		If $- w \not \in \Elas$, then Lemma \ref{lem:InfLemma} implies that the integral in \eqref{eq:EDDP} does not converge, and so $\edp^{\star} (w) = + \infty$.

		\item Let $\mathcal{Z}_{\ep}^{\star}(w) := \exp \edp_{\ep}^{\star}(w)$, and let $v, w \in - \mathring{\Elas}$ and $0 < s < 1$.  Then
		\begin{align*}
			& \mathcal{Z}_{0}^{\star} ( s v + (1 - s) w ) \\
			& \quad = \int_{\R^{n}} \exp \big( - \langle s v + (1 - s) w, z \rangle \big) \, \rd \psi(z) \\
			& \quad = \int_{\R^{n}} \exp \big( - s \langle v, z \rangle \big) \exp \big( - (1 - s) \langle w, z \rangle \big) \, \rd \psi(z) \\
			& \quad \leq \left( \int_{\R^{n}} \exp \big( - \langle v, z \rangle \big) \, \rd \psi(z) \right)^{s} \left( \int_{\R^{n}} \exp \big( - \langle w, z \rangle \big) \, \rd \psi(z) \right)^{1 - s} \\
			& \quad = \mathcal{Z}_{0}^{\star} (v^{\ast})^{s} \mathcal{Z}_{0}^{\star} (w)^{1 - s},
		\end{align*}
		where the inequality follows from H{\"o}lder's inequality.  Hence, since the logarithm is a monotonically increasing function, for all $v, w \in - \mathring{\Elas}$,
		\[
			\edp^{\star} ( s v + (1 - s) w ) \leq s \edp^{\star} (v^{\ast}) + (1 - s) \edp^{\star} (w).
		\]
		Moreover, since $- \Elas$ is convex and $\edp^{\star}$ is identically $+ \infty$ outside the interior of $- \Elas$, $\edp^{\star}$ is convex on all of $(\R^{n})^{\ast}$.

		\item The derivative $\D \mathcal{Z}_{0}^{\star} \colon - \mathring{\Elas} \to (\R^{n})^{\ast \ast} \cong \R^{n}$ can be computed using the standard theorem on differentiation under the integral sign, yielding
		\[
			\D \mathcal{Z}_{0}^{\star} (v) = \int_{\R^{n}} - z \exp \big( - \langle v, z \rangle \big) \, \rd \psi(z),
		\]
		and so on for higher-order derivatives:
		\[
			\D^{k} \mathcal{Z}_{0}^{\star} (v) = \int_{\R^{n}} (- z)^{\otimes k} \exp \big( - \langle v, z \rangle \big) \, \rd \psi(z).	
		\]
		The integrals involved are all finite for $- v \in \mathring{\Elas}$ because of the exponentially small tails of the measure $\psi$.
	
		\item As in the proof of the second part of the claim, let $- w \in \mathring{\Elas}$ and let $u_{w} \in \mathbb{S}^{n - 1}$ be such that $\langle w, u_{w} \rangle + \Psi(u_{w})$ is minimal (\emph{i.e.}\ equals $m(w)$).  Then
		\[
			\langle w, u_{w} + u \rangle + \Psi(u_{w} + u) \leq m(w) + | u | \big( | w | + C_{\Psi} \big).
		\]
		Since $m(w) \to 0$ as $-w \to \partial \Elas$, the same argument as in point (2) applies, and so $\edp^{\star}(w) \to + \infty$ as $-w \to \partial \Elas$.  Now suppose that $\D \edp^{\star}$ does not blow up.  Then, since $\edp^{\star}$ is smooth and $\Elas$ is compact, $\edp^{\star}$ would be bounded on $-\Elas$, which is a contradiction. 	\qedhere
	\end{enumerate}
\end{proof}

\begin{proof}[Proof of Lemma \ref{lem:unif_cvx_implies_good}.]
	The energy evolution equation for $\edp^{\star}$ along $y^{0}$ can be calculated using the chain rule, yielding
	\begin{align*}
		& \frac{\rd}{\rd t} \edp^{\star} \big( \D E(t, y^{0}(t)) \big) \\
		& \quad = - \left\langle \D^{2} E(t, y^{0}(t)) , \D \edp^{\star} ( \D E(t, y^{0}(t)) )^{\otimes 2} \right\rangle \\
		& \quad \quad \quad + \left\langle \partial_{t} \D E(t, y^{0}(t)), \D \edp^{\star}(\D E(t, y^{0}(t))) \right\rangle \\
		& \quad \leq - \gamma_{E} \left| \D \edp^{\star} \big( \D E(t, y^{0}(t) \big) \right|^{2} + \| \partial_{t} \D E \|_{L^{\infty}} \left| \D \edp^{\star} \big( \D E(t, y^{0}(t)) \big) \right|
	\end{align*}
	Proposition \ref{prop:EDDP}(5) implies that if $\edp^{\star}$ blows up along any curve (\emph{i.e.}\ one that approaches $- \partial \Elas$ in the dual space), then so does $| \D \edp^{\star} |$.  However, the mean value theorem and the above calculation imply that $\edp^{\star}$ must be decreasing when $| \D \edp^{\star} |$ is large.  This yields the desired contradiction.
\end{proof}

The next lemma (Lemma \ref{lem:FepsToF0}) concerns the closeness of the effective dual dissipation potential $\edp^{\star}$ and the corresponding quantity $\edp_{\ep}^{\star}$ that controls the increments of the Markov chain.  Lemma \ref{lem:FepsToF0Flows} gives the resulting bound for the classical gradient descents in $\edp^{\star} \circ \D E$ and $\edp_{\ep}^{\star} \circ \D E$.  Both these two results apply to the prototypical case of a quadratic energetic potential.

\begin{lemma}
	\label{lem:FepsToF0}
	Suppose that the energetic potential $E$ is smooth enough that
	\[
		M := \sup_{t \in [0, T]} \sup_{k \geq 2} \big\| \D^{k} E(t, \cdot) \big\|_{\mathrm{op}} < + \infty,
	\]
	where $\| \cdot \|_{\mathrm{op}}$ denotes the operator norm.  Then, for every $K \Subset - \Elas$ and every $k \in \mathbb{N} \cup \{ 0 \}$, $\D^{k} \edp_{\ep}^{\star} \to \D^{k} \edp^{\star}$ uniformly on $K$ as $\ep \to 0$.  More precisely, for every such $K$ and $k$, there exists a constant $C \geq 0$ such that
	\[
		\sup_{w \in K} \big| \D^{k} \edp_{\ep}^{\star} (w) - \D^{k} \edp^{\star} (w) \big| \leq C \ep^{1/2} \text{ for all small enough } \ep > 0.
	\]
\end{lemma}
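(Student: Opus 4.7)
My plan is to compare the partition functions $\mathcal{Z}_0^{\star} := \exp \edp^{\star}$ and $\mathcal{Z}_{\ep}^{\star} := \exp \edp_{\ep}^{\star}$ directly and then transfer the comparison to their logarithms. The difference of exponents between the integrands in \eqref{eq:Psitilde_eps} and \eqref{eq:EDDP} is exactly the Taylor remainder
\[
	R_{\ep}(z) := \sum_{k = 2}^{\infty} \tfrac{\ep^{k - 1}}{k !} \langle \D^{k} E(t_{i + 1}, x_{i}), z^{\otimes k} \rangle = \tfrac{1}{\ep} \bigl( E(t_{i + 1}, x_{i} + \ep z) - E(t_{i + 1}, x_{i}) - \ep \langle \D E(t_{i + 1}, x_{i}), z \rangle \bigr),
\]
which is non-negative by the standing convexity of $E$ and, by the second-order integral Taylor remainder combined with $\| \D^{2} E \|_{\mathrm{op}} \leq M$, satisfies $0 \leq R_{\ep}(z) \leq \tfrac{M}{2} \ep |z|^{2}$ for every $z \in \R^{n}$, uniformly in the implicit parameters $(t_{i + 1}, x_{i})$.

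Applying Lemma \ref{lem:InfLemma} together with the $1$-homogeneity of $\Psi$, the compactness of $K$ in $- \mathring{\Elas}$ yields $m_{0} := \inf_{w \in K} m(w) > 0$ with $\Psi(z) + \langle w, z \rangle \geq m_{0} |z|$ for every $w \in K$ and $z \in \R^{n}$. This exponential tail combined with the polynomial bound on $R_{\ep}$ justifies differentiation under the integral at every order, giving
\[
	\D^{k} \mathcal{Z}_{\ep}^{\star}(w) - \D^{k} \mathcal{Z}_{0}^{\star}(w) = (-1)^{k} \int_{\R^{n}} z^{\otimes k} \, e^{- \langle w, z \rangle - \Psi(z)} \bigl( e^{- R_{\ep}(z)} - 1 \bigr) \, \rd z.
\]
Using $|e^{- R_{\ep}(z)} - 1| = 1 - e^{- R_{\ep}(z)} \leq R_{\ep}(z) \leq \tfrac{M}{2} \ep |z|^{2}$ pointwise, the integrand is dominated by $\tfrac{M}{2} \ep |z|^{k + 2} e^{- m_{0} |z|}$, so that $\sup_{w \in K} |\D^{k} \mathcal{Z}_{\ep}^{\star}(w) - \D^{k} \mathcal{Z}_{0}^{\star}(w)| \leq C \ep$ with $C$ depending only on $k$, $M$ and $m_{0}$.

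To convert this into derivative estimates for $\edp^{\star} = \log \mathcal{Z}^{\star}$, Proposition \ref{prop:EDDP} gives $0 < \inf_{K} \mathcal{Z}_{0}^{\star} \leq \sup_{K} \mathcal{Z}_{0}^{\star} < + \infty$ by continuity of $\edp^{\star}$ on $- \mathring{\Elas}$, and the $k = 0$ case of the preceding estimate ensures $\mathcal{Z}_{\ep}^{\star} \geq \tfrac{1}{2} \inf_{K} \mathcal{Z}_{0}^{\star}$ on $K$ for all small enough $\ep$. Fa\`a di Bruno's formula then expresses $\D^{k} \log \mathcal{Z}^{\star}$ as a universal polynomial in the ratios $\D^{j} \mathcal{Z}^{\star} / \mathcal{Z}^{\star}$ for $j = 1, \ldots, k$; together with the uniform $O(\ep)$ convergence of each $\D^{j} \mathcal{Z}_{\ep}^{\star}$ and the uniform positivity of the denominators, this transfers the $O(\ep)$ rate to $\D^{k} \edp_{\ep}^{\star} - \D^{k} \edp^{\star}$, and \emph{a fortiori} yields the stated $O(\ep^{1/2})$ bound. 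The main delicate point I anticipate is not any single estimate in isolation but the bookkeeping required to keep every constant uniform over $w \in K$ and over the implicit parameters $(t_{i + 1}, x_{i})$ hidden inside $R_{\ep}$; this is precisely what the uniform-in-$(t, x)$ bound $M$ on all derivatives of $E$ is designed to control.
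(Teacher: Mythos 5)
Your argument is correct, and it reaches the conclusion by a genuinely different --- and in fact sharper --- route than the paper. The paper's proof estimates the exponent $\sum_{\ell \geq 2} \tfrac{\ep^{\ell - 1}}{\ell !} \langle \D^{\ell} E, z^{\otimes \ell} \rangle$ term by term using $\| \D^{\ell} E \|_{\mathrm{op}} \leq M$, which yields only the exponentially growing bound $\tfrac{M}{\ep} ( e^{\ep | z |} - 1 - \ep | z | )$; this forces a splitting of $I_{k, \ep}$ at a radius $R \sim \ep^{-1} \log (\cdots)$, the choice $a = \ep^{1/2}$ for the near-field error, and a balancing of the two pieces that produces exactly the stated $O(\ep^{1/2})$ rate. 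You instead resum the series as the second-order Taylor remainder of $E(t_{i + 1}, \cdot)$ at $x_{i}$, so that only $\| \D^{2} E \|_{\mathrm{op}} \leq M$ and the standing convexity of $E$ enter; the resulting uniform-in-$z$ bound $0 \leq R_{\ep}(z) \leq \tfrac{M}{2} \ep | z |^{2}$, together with $| e^{-R_{\ep}} - 1 | \leq R_{\ep}$ and the exponential tail $e^{- m_{0} | z |}$ supplied by Lemma \ref{lem:InfLemma}, turns the comparison of the partition functions into a one-line dominated estimate with the strictly better rate $O(\ep)$, no splitting required. Two dependencies are worth making explicit: (i) identifying the series in \eqref{eq:Psitilde_eps} with the increment of $E$ requires the Taylor series to converge to $E$, which does hold here since the $N$-th remainder is at most $M | \ep z |^{N + 1} / (N + 1)!$ under the hypothesis on $M$; (ii) the sign condition $R_{\ep} \geq 0$, i.e.\ convexity of $E$, is genuinely needed for your pointwise bound (without it one only has $| e^{-R_{\ep}} - 1 | \leq | R_{\ep} | e^{| R_{\ep} |}$, whose $e^{c \ep | z |^{2}}$ growth is not dominated by $e^{- m_{0} | z |}$); convexity is a standing assumption of Section \ref{sec:main} and the paper's own far-field estimate relies on it as well, so this is not a gap, merely a hypothesis to cite. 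Finally, your Fa\`{a} di Bruno step for passing from $\mathcal{Z}^{\star}$ to $\log \mathcal{Z}^{\star}$ is more careful than the paper's inequality \eqref{eq:DkFepsErrorBound}, which as written treats $\D^{k} \edp^{\star}$ as $\D^{k} \mathcal{Z}_{0}^{\star} / \mathcal{Z}_{0}^{\star}$ and is therefore literally valid only for $k \leq 1$; both routes need the same ingredients (lower bounds on $\mathcal{Z}_{0}^{\star}$ and $\mathcal{Z}_{\ep}^{\star}$ and upper bounds on $| \D^{j} \mathcal{Z}_{0}^{\star} |$ over $K \Subset - \Elas$), but yours handles general $k$ correctly.
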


\begin{proof}
	The essential quantity to estimate is
	\[
		I_{k, \ep} (w) := \int_{\R^{n}} \left| 1 - \exp \left( - \sum_{\ell = 2}^{\infty} \frac{\ep^{\ell - 1}}{\ell !} \big\langle \D^{\ell} E(t, x), z^{\otimes \ell} \big\rangle \right) \right| | z |^{k} e^{- (\langle w, z \rangle + \Psi(z))} \, \rd z,
	\]
	since, by the elementary inequality
	\[
		\left| \frac{a}{b} - \frac{c}{d} \right| \leq \frac{| a - c |}{| d |} + \frac{| c | | b - d |}{| b d |},
	\]
	it holds true that
	\begin{equation}
		\label{eq:DkFepsErrorBound}
		| \D^{k} \edp_{\ep}^{\star} (w) - \D^{k} \edp^{\star} (w) | \leq \frac{1}{\mathcal{Z}_{0}^{\star} (w)} I_{k, \ep} (w) + \frac{| \D^{k} \mathcal{Z}_{0}^{\star} (w) |}{\mathcal{Z}_{0}^{\star} (w) \mathcal{Z}_{\ep} (w)} I_{0, \ep} (w).
	\end{equation}

	Let $m(w) := \inf \{ \langle w, z \rangle + \Psi(z) \mid | z | = 1 \}$.  By Lemma \ref{lem:InfLemma}, $m$ is continuous and bounded away from $0$ on $K$.  Similarly, since $\mathcal{Z}_{0}^{\star}$ and $\mathcal{Z}_{\ep}$ are continuous and positive, they are bounded away from $0$ on $K$, and $| \D^{k} \mathcal{Z}_{0}^{\star} |$ is bounded on $K$.  (Note that all these bounds fail on $- \partial \Elas$, so the assumption that $K \Subset - \Elas$ is essential.)  Thus, the emphasis is on estimating $I_{k, \ep} (w)$ in terms of $\ep$ and uniformly over $K$.  

	$I_{k, \ep} (w)$ will be estimated by splitting the integral into two parts:  an integral over a ball around the origin in $\R^{n}$, and an integral over the complement.  More precisely, for any $a \in (0, 1)$, let $R = R(a, \ep, x, t) > 0$ be such that
	\begin{equation}
		\label{eq:aandRBound}
		| z | \leq R \implies 1 - \exp \left( - \sum_{\ell = 2}^{\infty} \frac{\ep^{\ell - 1}}{\ell !} \langle \D^{\ell} E(t, x), z^{\otimes \ell} \rangle \right) \leq a.
	\end{equation}

	\noindent Converting to spherical polar coordinates yields that, for some constant $c_{n}$ depending only on $n$,
	\[
		I_{k, \ep} (w) \leq c_{n} a \int_{0}^{R} r^{k + n - 1} e^{- m(w) r} \, \rd r + c_{n} \int_{R}^{+ \infty} r^{k + n - 1} e^{- m(w) r} \, \rd r.
	\]
	This estimate is valid for any $a \in (0, 1)$ and corresponding $R$.  The above integrals can be evaluated exactly using the recurrence relation
	\[
		\int x^{n} e^{c x} \, \rd x = \frac{x^{n} e^{c x}}{c} - \frac{n}{c} \int x^{n - 1} e^{c x} \, \rd x;
	\]
	the resulting polynomial-exponential expressions are a bit cumbersome to deal with, but only the leading-order contributions as $\ep \to 0$ are of interest here.  

	We now make a specific choice of $a$ and $R$ such that $a \to 0$ and $R \to \infty$ at the right relative rates.  Let $a := \ep^{1/2}$ and let
	\[
		R := \frac{1}{\ep} \log \left( - \frac{\ep}{M} \log ( 1 - \ep^{1/2} ) \right).
	\]
	For any $z \in \R^{n}$,
	\begin{align*}
		\exp \left( - \sum_{\ell = 2}^{\infty} \frac{\ep^{\ell - 1}}{k !} \langle \D^{\ell} E(t, x), z^{\otimes \ell} \rangle \right)
		& = 
		\frac{1}{\ep} \sum_{\ell = 2}^{\infty} \frac{\ep^{\ell}}{\ell !} \big\langle \D^{\ell} E(t, x) , z^{\otimes \ell} \big\rangle \\
		& \leq \frac{1}{\ep} \sum_{\ell = 2}^{\infty} \frac{\ep^{\ell}}{\ell !} \big\| \D^{\ell} E(t, \cdot) \big\|_{\mathrm{op}} | z |^{\ell} \\
		& \leq \frac{M}{\ep} \exp ( \ep | z | ),
	\end{align*}
	and if $| z | \leq R$, then
	\[
		\frac{1}{\ep} \sum_{\ell = 2}^{\infty} \frac{\ep^{\ell}}{\ell !} \big\langle \D^{\ell} E(t, x) , z^{\otimes \ell} \big\rangle \leq - \log ( 1 - \ep^{1/2} ),
	\]
  as required for \eqref{eq:aandRBound} to hold.

	By l'H\^{o}pital's rule, for this choice of $a$ and $R$, $a \to 0$ and $R \to + \infty$ as $\ep \to 0$, and there exist constants $c_{1}$, $c_{2}$ such that
	\[
		I_{k, \ep} (w) \leq c_{1} \ep^{1/2} \left( \frac{R}{m(w)} \right)^{k + n - 1} e^{- m(w) R} + c_{2} \left( \frac{R}{m(w)} \right)^{k + n - 1} e^{- m(w) R}.
	\]

	\noindent The dominant term here is the $\ep^{1/2}$ term, since $R^{k + n - 1} e^{- m(w) R}$ not only tends to $0$, but does so with all derivatives tending to zero as well;  $m(w)$ is bounded away from zero for $w \in K \Subset - \Elas$.  Thus, there is a constant $C_{k}$ (dependent on $k$ and the other geometric parameters, but not on $\ep$) such that
	\[
	\sup_{w \in K} I_{k, \ep} (w) \leq C_{k} \ep^{1/2} \text{ for all small enough } \ep > 0.
	\]
	Thus, by \eqref{eq:DkFepsErrorBound}, as claimed
	\[
		\sup_{w \in K} | \D^{k} \edp_{\ep}^{\star} (w) - \D^{k} \edp^{\star} (w) | \leq C'_{k} \ep^{1/2} + C'_{0} \ep^{1/2} \in O(\ep^{1/2}) \text{ as } \ep \to 0. \qedhere
	\]
\end{proof}

\begin{lemma}
	\label{lem:FepsToF0Flows}
	Suppose that $y^{\ep}$, $\ep > 0$, and $y^{0}$ solve
	\begin{align*}
		\dot{y}^{\ep} & = - \D \edp_{\ep}^{\star} (\D E(t, y^{\ep})), \\
		\dot{y}^{0} & = - \D \edp^{\star} (\D E(t, y^{0})),
	\end{align*}
	with initial conditions $y^{\ep}(0) = y^{0}(0) = x_{0}$ such that $- \D E(0, x_{0}) \in \mathring{\Elas}$, and that \eqref{eq:MarkovLim-Convex-USC} holds.  Then there exists a constant $C \geq 0$ such that
	\[
		\sup_{t \in [0, T]} \big| y^{\ep}(t) - y^{0}(t) \big| \leq C \ep^{1/2} \text{ for all small enough } \ep > 0.
	\]
\end{lemma}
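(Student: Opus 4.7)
The plan is a standard Gronwall comparison, with the two subtleties that (i) the vector fields $\D \edp_\ep^\star(\D E(t,\cdot))$ and $\D \edp^\star(\D E(t,\cdot))$ are only Lipschitz on regions whose image under $-\D E(t,\cdot)$ lies in a compact subset of $\mathring\Elas$, and (ii) Lemma \ref{lem:FepsToF0} gives uniform control of $\D^k \edp_\ep^\star - \D^k \edp^\star$ only on such compact subsets.

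First I would use hypothesis \eqref{eq:MarkovLim-Convex-USC} together with Proposition \ref{prop:EDDP}(5) to infer that the trajectory of $y^0$ remains \emph{uniformly inside} $\mathring\Elas$ in the sense that there exists a compact $K_0 \Subset -\mathring\Elas$ with $-\D E(t, y^0(t)) \in K_0$ for every $t \in [0,T]$. (Indeed, boundedness of $\edp^\star \circ \D E(\cdot,y^0(\cdot))$ forces its argument to stay away from the blow-up locus $-\partial\Elas$.) Fix then a slightly larger compact neighbourhood $K \Subset -\mathring\Elas$ with $K_0$ contained in its interior.

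Next I would run a bootstrap argument to confine $y^\ep$ similarly. Let
\[
	\tau_\ep := \inf\{ t \in [0,T] \mid -\D E(t, y^\ep(t)) \notin K \},
\]
with the convention $\inf \emptyset = T$. On $[0, \tau_\ep]$, Lemma \ref{lem:FepsToF0} applied with $k = 1$ on $K$ gives a constant $C_1$ such that $|\D \edp_\ep^\star(w) - \D \edp^\star(w)| \leq C_1 \ep^{1/2}$ uniformly on $K$, and applied with $k=2$ it gives a uniform Lipschitz bound $L_\edp$ on $\D \edp_\ep^\star$ over $K$ for all small $\ep$. Together with the global Lipschitz bound $L_E$ on $\D E$, the difference $e(t) := y^\ep(t) - y^0(t)$ satisfies, on $[0, \tau_\ep]$,
\begin{align*}
	\dot e(t) &= -\bigl[\D \edp_\ep^\star(\D E(t,y^\ep)) - \D \edp_\ep^\star(\D E(t,y^0))\bigr] \\
	&\quad - \bigl[\D \edp_\ep^\star(\D E(t,y^0)) - \D \edp^\star(\D E(t,y^0))\bigr],
\end{align*}
so $|\dot e(t)| \leq L_\edp L_E |e(t)| + C_1 \ep^{1/2}$. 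Since $e(0)=0$, Gronwall's inequality yields
\[
	\sup_{t \in [0,\tau_\ep]} |e(t)| \leq \frac{C_1}{L_\edp L_E}\bigl(e^{L_\edp L_E T} - 1\bigr) \ep^{1/2} =: C \ep^{1/2}.
\]

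The main obstacle, and the reason for the bootstrap, is closing the argument $\tau_\ep = T$. Since $K_0 \subset \mathrm{int}(K)$ and $\D E$ is Lipschitz in $x$, there is $\delta > 0$ such that $|y^\ep(t)-y^0(t)| \leq \delta$ implies $-\D E(t,y^\ep(t)) \in K$. Choosing $\ep$ small enough that $C\ep^{1/2} < \delta$, the Gronwall bound prevents $-\D E(t,y^\ep(t))$ from reaching $\partial K$ before time $T$; a standard continuity-of-solutions argument then forces $\tau_\ep = T$, so the bound holds on all of $[0,T]$ and the proof is complete.
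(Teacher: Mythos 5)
Your proposal is correct and follows essentially the same route as the paper: confine $-\D E(t,y^{0}(t))$ to a compact subset of $-\mathring{\Elas}$ via \eqref{eq:MarkovLim-Convex-USC} and Proposition \ref{prop:EDDP}(5), invoke Lemma \ref{lem:FepsToF0} for the $O(\ep^{1/2})$ discrepancy of the vector fields, and close with Gr\"{o}nwall. Your version is in fact slightly more careful than the paper's: the stopping-time bootstrap that keeps $-\D E(t,y^{\ep}(t))$ inside $K$ (so that the uniform Lipschitz bound on $\D\edp_{\ep}^{\star}$ actually applies along $y^{\ep}$) is glossed over in the paper, and your Gr\"{o}nwall constant $\frac{C_{1}}{L_{\edp}L_{E}}(e^{L_{\edp}L_{E}T}-1)$ is the standard and correct form.
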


\begin{proof}
	The strategy is to appeal to Lemma \ref{lem:FepsToF0} and Gr{\"o}nwall's inequality. First, note that there exists a $K \Subset \Elas$ such that $- \D E(t, y^{0}(t)) \in K$ for all $t \geq 0$, \emph{i.e.}
	\[
		\inf_{t \in [0, T]} \mathrm{dist} \big( - \D E(t, y^{0}(t)), \partial \Elas \big) > 0,
	\]
	for otherwise, since $\edp^{\star}$ blows up to $+ \infty$ on $- \partial \Elas$ (by Proposition \ref{prop:EDDP}(5)), it would follow that $t \mapsto \edp^{\star} ( \D E(t, y^{0}(t)) )$ blows up to $+ \infty$ in finite time, which would contradict \eqref{eq:MarkovLim-Convex-USC}.

	Assume that $\ep > 0$ is small enough that the conclusion of Lemma \ref{lem:FepsToF0} holds.  Then, by Lemma \ref{lem:FepsToF0}, there exists $C \geq 0$ such that
	\[
		\sup_{t \in [0, T]} \big| \D \edp_{\ep}^{\star} (t, y^{0}(t)) - \D \edp^{\star} (t, y^{0}(t)) \big| \leq C \ep^{1/2}.
	\]
	Let $L$ be the product of the (finite) Lipschitz constants for $\D E$ and $\D \edp^{\star} |_{K}$.  Then, by Gr{\"o}nwall's inequality, for all $t \in [0, T]$,
	\[
		| y^{\ep}(t) - y^{0}(t) | \leq \frac{C}{L} \ep^{1/2}.  \qedhere
	\]
\end{proof}

\begin{lemma}
	\label{lem:ParabolicTightnessDiscrete}
	Consider a uniform partition $P$ of $[0, T]$ with $[P] = h > 0$.  Let $X$ be the Markov chain generated by $E$ (convex) and $\Psi$ (as usual) with $\ep = h$, and assume that \eqref{eq:NastyConvexityAssumption} holds.  Let $(y_{i})_{i = 1}^{N}$ be the Euler approximation to
	\[
		\dot{y} = - \D \edp_{h}^{\star} ( \D E(t, y(t)) )
	\]
	given by
	\[
		\Delta y_{i} := - h \D \edp_{h}^{\star} ( \D E(t_{i + 1}, y_{i})),
	\]
	with $X_{0} = y_{0}$ such that $- \D E(0, y_{0}) \in \mathring{\Elas}$.  Then, for every $\eta > 0$,
	\begin{equation}
		\label{eq:ParabolicTightnessDiscrete} %
		\mathbb{P} \left[ \max_{0 \leq i \leq T / h} \big| X_{i} - y_{i} \big| \geq \eta \right] \in O(h) \text{ as } h \to 0.
	\end{equation}
\end{lemma}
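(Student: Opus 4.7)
The plan is to view $Z_i := X_i - y_i$ as a discrete-time stochastic process with $Z_0 = 0$ and to show, by a Doob-type analysis of $|Z_i|^2$ exploiting the monotonicity assumption \eqref{eq:NastyConvexityAssumption}, that $\E[\max_{i \le N}|Z_i|^2] = O(h)$, where $N := T/h$; the estimate \eqref{eq:ParabolicTightnessDiscrete} will then follow from Markov's inequality.

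First I would split $\Delta Z_{i+1}$ into its drift and martingale parts. By Lemma \ref{lem:Markov-ExpAndMoments} with $\ep = h$, the conditional mean of $\Delta X_{i+1}$ matches the Euler step exactly, so
\[
\E\bigl[\Delta Z_{i+1} \bigm| \mathcal{F}_i\bigr] = -h\bigl[\D\edp_h^{\star}(\D E(t_{i+1}, X_i)) - \D\edp_h^{\star}(\D E(t_{i+1}, y_i))\bigr],
\]
and the martingale increment $N_{i+1} := \Delta Z_{i+1} - \E[\Delta Z_{i+1}\mid\mathcal{F}_i]$ obeys $\E[|N_{i+1}|^p \mid \mathcal{F}_i] = O(h^p)$ for $p \in \{2,4\}$, again by Lemma \ref{lem:Markov-ExpAndMoments}, with constants uniform as long as $-\D E(t_{i+1}, X_i)$ stays in a compact subset of $\mathring\Elas$.

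Next I would control $\E[|Z_{i+1}|^2 \mid \mathcal{F}_i]$ using monotonicity. Expanding the square, the cross term in
\[
\E\bigl[|Z_{i+1}|^2 \bigm| \mathcal{F}_i\bigr] = |Z_i|^2 + 2\bigl\langle Z_i,\, \E[\Delta Z_{i+1}\mid \mathcal{F}_i]\bigr\rangle + \E\bigl[|\Delta Z_{i+1}|^2 \bigm| \mathcal{F}_i\bigr]
\]
equals $-2h\langle X_i-y_i,\, \D\edp_h^{\star}(\D E(t_{i+1},X_i)) - \D\edp_h^{\star}(\D E(t_{i+1},y_i))\rangle$. Hypothesis \eqref{eq:NastyConvexityAssumption} says that $x \mapsto \D(\edp^{\star}\circ \D E)(t,x)$ is monotone, and Lemma \ref{lem:FepsToF0} transfers this to $\edp_h^{\star}$ with an $O(h^{1/2})$ defect in the $C^2$ norm, so the cross term is bounded above by $Ch^{3/2}|Z_i|^2$. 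Combined with the $O(h^2)$ quadratic term this yields
\[
\E\bigl[|Z_{i+1}|^2 \bigm| \mathcal{F}_i\bigr] \le \bigl(1 + 2Ch^{3/2}\bigr)|Z_i|^2 + C'h^2,
\]
whose expectation iterates via discrete Gr\"onwall to $\E[|Z_i|^2] \le C''h$ for every $i \le N$, since $(1+2Ch^{3/2})^{T/h} \to 1$ as $h \to 0$.

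Finally I would upgrade this to a maximal estimate by means of the Doob decomposition $|Z_k|^2 = M_k + A_k$, with $M$ a mean-zero martingale and $A$ predictable. The same conditional-increment bound gives $A_k \le 2Ch^{3/2}\sum_{j<k}|Z_j|^2 + C'h^2 k$, so $\E[\max_k A_k] = O(h)$ after using the previous step; meanwhile the conditional variance
\[
\mathrm{Var}\bigl(\Delta|Z_k|^2 \bigm| \mathcal{F}_k\bigr) \le 8|Z_k|^2\,\E[|\Delta Z_{k+1}|^2\mid\mathcal{F}_k] + 2\,\E[|\Delta Z_{k+1}|^4\mid\mathcal{F}_k]
\]
together with $\E[|Z_k|^2] = O(h)$ and the fourth-moment bound from Lemma \ref{lem:Markov-ExpAndMoments} gives $\E[M_N^2] = O(h^2)$, so Doob's $L^2$ maximal inequality yields $\E[\max_k |M_k|] = O(h)$, hence $\E[\max_k |Z_k|^2] = O(h)$ and \eqref{eq:ParabolicTightnessDiscrete} by Markov. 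The main obstacle is the cross term: without monotonicity, a Lipschitz bound on $\D\edp_h^{\star}\circ \D E$ would blow up as $-\D E$ approaches $\partial \Elas$ (Proposition \ref{prop:EDDP}(5)), giving only a local result; assumption \eqref{eq:NastyConvexityAssumption} bypasses this by giving the cross term the correct sign irrespective of $|Z_i|$, at the price of the delicate $C^2$ transfer via Lemma \ref{lem:FepsToF0}. A secondary technicality is confining $X_i$ to a compact subset of $\{x : -\D E(t,x) \in \mathring\Elas\}$ with high probability, which follows from \eqref{eq:MarkovLim-Convex-USC} and the proximity of $X$ to $y$.
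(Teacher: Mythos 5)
Your overall strategy coincides with the paper's up to the second-moment recursion: both arguments split $Z_i := X_i - y_i$ into a drift part, controlled in sign by the monotonicity coming from \eqref{eq:NastyConvexityAssumption}, plus a mean-zero noise part whose conditional moments are $O(h^k)$ by Lemma \ref{lem:Markov-ExpAndMoments}, and both arrive at $\max_i \E [ | Z_i |^2 ] = O(h)$. You diverge in two places. First, the paper simply asserts that $f_h := - \D \edp_h^{\star} \circ \D E$ is itself monotone on the stable region, so its cross term is $\leq 0$ exactly; you instead transfer monotonicity from $\edp^{\star}$ to $\edp_h^{\star}$ via Lemma \ref{lem:FepsToF0}, accepting an $O(h^{1/2})$ defect. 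This is defensible --- arguably more careful, since \eqref{eq:NastyConvexityAssumption} is literally a statement about $\edp^{\star}$, not $\edp_h^{\star}$ --- but your bound $C h^{3/2} | Z_i |^2$ on the cross term genuinely needs the \emph{second}-derivative case of Lemma \ref{lem:FepsToF0}, applied along the whole segment $[ y_i, X_i ]$ so that $\D^2 ( \edp_h^{\star} \circ \D E ) \geq - C h^{1/2} I$ there; the $C^1$ estimate alone yields only $C h^{3/2} | Z_i |$, which after Gr\"onwall gives $\E [ | Z_i |^2 ] = O(h^{1/2})$ and hence only $O(h^{1/2})$ in \eqref{eq:ParabolicTightnessDiscrete}. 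You should therefore make the Hessian/segment argument explicit and justify that the segment stays in the region where the estimate holds (convexity of the sublevel sets of $\edp^{\star} \circ \D E$, which \eqref{eq:NastyConvexityAssumption} supplies). Second, for the maximal estimate the paper bootstraps to the fourth-moment bound $\max_i \E [ | Z_i |^4 ] \leq C T h^2$ and finishes with a union bound plus Chebyshev; your route through the Doob decomposition of $| Z_k |^2$ and Doob's $L^2$ maximal inequality is a genuine alternative that reaches the same $O(h)$ rate and avoids the tetralinear-form bookkeeping, at the price of still requiring the conditional fourth moment of the increments for the quadratic-variation bound.

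One further caveat: your localization step (``confining $X_i$ to a compact subset \ldots\ follows from \eqref{eq:MarkovLim-Convex-USC} and the proximity of $X$ to $y$'') is circular as written, since the proximity of $X$ to $y$ is precisely the conclusion. The paper's device is to condition throughout on the event $\mathcal{K}_i$ that $X_j$ has remained in a compact neighbourhood $K(t_{j+1})$ of the limiting trajectory for all $j \leq i$, so that the constants in (M), (B) and the moment bounds are uniform; you need some such conditioning or stopping-time argument to legitimize the ``uniform constants'' you invoke.
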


\begin{proof}
	In order to simplify the notation, assume that the partition $P$ is a uniform partition with $[P] = h > 0$, and define a time-dependent vector field $f_{h}$ by $f_{h} (t, x) := -\D \edp_{h}^{\star} (\D E(t, x))$.  Fix $\delta > 0$ small enough that
	\[
		K(t) := \{ x \in \Stab(t) \mid \mathrm{dist}(x, \partial \Stab(t)) > \delta \}
	\]
	is non-empty and contains $y(t)$ for every $t \in [0, T]$.  Furthermore, using the Lipschitz assumption on $\D E$, assume that $\delta$ is small enough that $K(t_{i}) \Subset \Stab(t_{i \pm 1})$ for each $i$.  Write (dropping the superscript that indicates the partition $P$ or its mesh size $h$)
	\begin{align*}
		X_{i + 1} &= X_{i} + h f_{h} (t_{i + 1}, X_{i}) + \Xi_{i + 1} (X_{i}), \\
		y_{i + 1} &= y_{i} + h f_{h} (t_{i + 1}, y_{i}).
	\end{align*}
	By Lemma \ref{lem:Markov-ExpAndMoments} (or, more precisely, its generalization to non-quadratic $E$ through \eqref{eq:Psitilde_eps}), for each $x$, $\Xi_{i + 1}(x)$ is a random variable with mean 0 and $k^{\mathrm{th}}$ central moment at most $C_{k}(x) h^{k}$.  The deviations $Z := X - y$ satisfy
	\begin{equation}
		\label{eq:RecurrenceForDeviations} %
		Z_{i + 1} = Z_{i} + h \big( f_{h} (t_{i + 1}, X_{i}) - f_{h} (t_{i + 1}, y_{i}) \big) + \Xi_{i + 1} (X_{i}).
	\end{equation}

	In summary, we have the following facts:
	\begin{itemize}
		\item[(M)] $f_{h} (t_{i + 1}, \cdot)$ is a monotonically decreasing vector field on $\Stab(t_{i + 1})$;
		\item[(B)] $f_{h} (t_{i + 1}, \cdot)$ is bounded on compactly-embedded subsets of $\Stab(t_{i + 1})$;
		\item[(Z)] for every $x$, $\E [ \Xi_{i + 1}(x) ] = 0$.
	\end{itemize}

	Let $\mathcal{K}_{i}$ be (the $\sigma$-algebra generated by) the event that $X_{j} \in K(t_{j + 1})$ for $0 \leq j \leq i$.  Applying the conditional expectation operator $\E [ \cdot | \mathcal{K}_{i} ]$ (which is never conditioning on an event of zero probability) to the Euclidean dot product of \eqref{eq:RecurrenceForDeviations} with itself yields that
	\begin{align*}
		& \E \big[ | Z_{i + 1} |^{2} \big| \mathcal{K}_{i} \big] - \E \big[ | Z_{i} |^{2} \big| \mathcal{K}_{i} \big] \\
		& \quad = 2 h \E \big[ (f_{h} (t_{i + 1}, X_{i}) - f_{h} (t_{i + 1}, y_{i})) \cdot Z_{i} \big| \mathcal{K}_{i} \big] && \text{$\leq 0$ by (M)} \\
		& \qquad + 2 \E \big[ Z_{i} \cdot \Xi_{i + 1} (X_{i}) \big| \mathcal{K}_{i} \big] && \text{$= 0$ by (Z)} \\
		& \qquad + 2 h \E \big[ (f_{h} (t_{i + 1}, X_{i}) - f_{h} (t_{i + 1}, y_{i})) \cdot \Xi_{i + 1} (X_{i}) \big| \mathcal{K}_{i} \big] && \text{$= 0$ by (Z)} \\
		& \qquad + h^{2} \E \big[ | f_{h} (t_{i + 1}, X_{i}) - f_{h} (t_{i + 1}, y_{i}) |^{2} \big| \mathcal{K}_{i} \big] && \text{$\leq C h^{2}$ by (B)} \\
		& \qquad + \E \big[ | \Xi_{i + 1} (X_{i}) |^{2} \big| \mathcal{K}_{i} \big] && \text{$\leq C h^{2}$ by Lemma \ref{lem:Markov-ExpAndMoments}} \\
		& \quad \leq C h^{2},
	\end{align*}
	and application of the unconditional expectation operator to both sides yields the following uniform bound for the second moment of the deviations:
	\begin{equation}
		\label{eq:Deviation2MomUnifBd} %
		\max_{0 \leq i \leq T / h} \E \big[ | Z_{i} |^{2} \big] \leq C T h.
	\end{equation}

	Inequality \eqref{eq:Deviation2MomUnifBd} can be used to ``bootstrap'' a similar inequality for the fourth moments. Define a tetralinear form $\tau \colon (\R^{n})^{4} \to \R$ by
	\begin{equation}
		\label{eq:TetralinearForm} %
		\tau(w, x, y, z) := (w \cdot x) (y \cdot z),
	\end{equation}
	so that $| x |^{4} = \tau(x, x, x, x)$. This tetralinear form is invariant under arbitrary compositions of the following interchanges of entries: $(1, 2)$, $(3, 4)$ and $(1, 3) (2, 4)$.  The Cauchy--Bunyakovski{\u\i}--Schwarz inequality for the Euclidean inner product implies a corresponding inequality for this tetralinear form: for all $w, x, y, z \in \R^{n}$,
	\begin{equation}
		\label{eq:CS4} %
		| \tau(w, x, y, z) | \leq | w | | x | | y | | z |.
	\end{equation}

	Hence, $\E \big[ | Z_{i + 1} |^{4} \big] \equiv \E \big[ \E \big[ | Z_{i + 1} |^{4} \big| \mathcal{K}_{i} \big] \big]$ can be expanded using the tetralinear form \eqref{eq:TetralinearForm} and \eqref{eq:RecurrenceForDeviations} and each term estimated as in the derivation of \eqref{eq:Deviation2MomUnifBd}.  By (Z), those terms containing precisely one $\Xi_{i + 1} (X_{i})$ have zero expectation; the terms of the form
	\[
		\E \left[ \left. \tau \big( Z_{i}, Z_{i}, Z_{i}, h (f_{h} (t_{i + 1}, X_{i}) - f_{h} (t_{i + 1}, y_{i})) \big) \right| \mathcal{K}_{i} \right]
	\]
	are non-positive by (M); the remaining terms can all be estimated using (B), \eqref{eq:Deviation2MomUnifBd}, \eqref{eq:CS4} and Lemma \ref{lem:Markov-ExpAndMoments}, with the worst bound being $O(h^{3})$. Thus, the following uniform bound for the fourth moment of the deviations holds:
	\begin{equation}
		\label{eq:Deviation4MomUnifBd} %
		\max_{0 \leq i \leq T / h} \E \big[ | Z_{i} |^{4} \big] \leq C T h^{2}.
	\end{equation}

	Hence, for $\eta > 0$,
	\begin{align*}
		& \mathbb{P} \big[ | Z_{i} | \geq \eta \text{ for some } 0 \leq i \leq T / h \big] \\
		& \quad \leq \sum_{i = 0}^{T / h} \mathbb{P} \big[ | Z_{i} | \geq \eta \big] \\
		& \quad \leq \sum_{i = 0}^{T / h} \eta^{-4} \E \big[ | Z_{i} |^{4} \big] && \text{by Chebyshev's inequality} \\
		& \quad \leq \eta^{-4} C T^{2} h && \text{by \eqref{eq:Deviation4MomUnifBd},}
	\end{align*}
	which establishes \eqref{eq:ParabolicTightnessDiscrete} and completes the proof.
\end{proof}

%% The Bibliography
\bibliographystyle{amsplain}
\bibliography{./refs.bib}

\end{document}